%% file: thesis1.9.tex
\documentclass{amsart}

\usepackage[T1]{fontenc}
\usepackage[utf8]{inputenc}
\usepackage{amssymb}
\usepackage{booktabs}
\usepackage{tabularx}
\usepackage{xcolor}
\usepackage{tikz}
\usetikzlibrary{calc,positioning}
\usepackage{graphicx}
\usepackage{bm}
\usepackage[colorlinks=true, linkcolor=blue, urlcolor=blue, citecolor=blue, backref=page]{hyperref}

\input{tikz/styles.tex}

\allowdisplaybreaks

\newtheorem{theorem}{Theorem}[section]
\newtheorem{lemma}[theorem]{Lemma}
\newtheorem{proposition}[theorem]{Proposition}

\theoremstyle{definition}

\theoremstyle{remark}
\newtheorem{remark}[theorem]{Remark}

\numberwithin{equation}{section}

\newcommand{\R}{\mathbb R}
\newcommand{\Pp}{\mathbb P}
\newcommand{\Th}{\mathcal T_h}

\newcommand{\diver}{\operatorname{div}}

\newcommand{\conv}{\operatorname{conv}}

\newcommand{\linspan}{\operatorname{span}}
\newcommand{\dd}{\,\mathrm d}
\newcommand{\norm}[1]{\left\lVert #1\right\rVert}
\newcommand{\seminorm}[1]{\left\lvert #1\right\rvert}

\begin{document}

\title[Scott--Vogelius elements on Freudenthal meshes]{The Scott--Vogelius element is inf-sup stable on Freudenthal meshes for $k \geq 4$}

\author{Siqi Ding}
\address{School of Mathematical Sciences, University of Chinese Academy of Sciences, Beijing 100049, China; Institute of Computational Mathematics and Scientific/Engineering
Computing, Academy of Mathematics and Systems Science,
Chinese Academy of Sciences, Beijing 100190, China}
\email{dingsiqi24@mails.ucas.ac.cn}

\author{Pingbing Ming, Haijun Yu}
\address{State Key Laboratory of Mathematical Sciences (SKLMS) \& LSEC, Institute of Computational Mathematics and Scientific/Engineering
Computing, Academy of Mathematics and Systems Science, Chinese Academy of Sciences,
Beijing 100190, China; School of Mathematical Sciences, University of Chinese Academy of Sciences, Beijing 100049, China}
\email{mpb@lsec.cc.ac.cn, hyu@lsec.cc.ac.cn}

\author{Quanfan Zhu}
\address{School of Mathematical Sciences, University of Chinese Academy of Sciences, Beijing 100049, China; Institute of Computational Mathematics and Scientific/Engineering
Computing, Academy of Mathematics and Systems Science,
Chinese Academy of Sciences, Beijing 100190, China}
\email{zhuquanfan@amss.ac.cn}

\subjclass[2020]{Primary 65N30; Secondary 65N12, 65N15}
\keywords{Scott--Vogelius finite elements, divergence-free approximation,
  Freudenthal meshes, discrete inf--sup stability, edge patches}
\date{}

\begin{abstract}
The Scott--Vogelius element is a classical divergence-free mixed finite element for the Stokes problem that has attracted decades of research attention, yet its theoretical framework remains incomplete. In two dimension, the inf-sup stability on Freudenthal and other regular meshes has been rigorously established. In three dimension, Zhang established inf-sup stability on Freudenthal meshes for $k\ge6$ in 2011, while numerical evidence indicates that inf-sup stability remains true for $k=4,5$. In this paper, we strengthen Zhang's approach and prove that the Scott--Vogelius element is inf-sup stable on Freudenthal meshes for every velocity degree $k\ge 4$, thereby resolving a conjecture proposed  by \textsc{Farrell, Mitchell, and Scott} in 2024. The proof proceeds by explicit constructions on local patches and does not rely on computer verification.
\end{abstract}

\maketitle

\section{Introduction}

The Scott--Vogelius element, introduced by \textsc{Scott} and \textsc{Vogelius} in 1985~\cite{ScottVogelius:1985}, yields exactly divergence-free velocity approximations when applied to the Stokes equations. In two dimensions, its inf-sup stability was proved by \textsc{Guzm\'an} and \textsc{Scott}~\cite{GuzmanScott:2019} for piecewise quartic and higher-degree velocity fields on shape-regular meshes. In three dimensions, stability is known only on special meshes, such as those obtained from Alfeld splits~\cite{Zhang:2005,GuzmanNeilan:2018,FuGuzmanMeilan:2020} and Worsey--Farin splits~\cite{Zhang:2011,GuzmanLischkeNeilan:2022,FabienGuzmanNeilanZytoon2022}, and on the Freudenthal mesh~\cite{Freudenthal1942}, where \textsc{Zhang}~\cite{Zhang2011} proved stability in 2011 for velocity degrees $k\ge 6$, a degree restriction that stood for more than a decade. However, numerical evidence indicates that on Freudenthal meshes stability already holds for $k=4$ and $k=5$~\cite{FarrellMitchellScott2024}. In 2024, \textsc{Farrell, Mitchell, and Scott}~\cite{FarrellMitchellScott2024} conjectured that the element is stable there for all $k\ge 4$. In this paper we prove this conjecture: the Scott--Vogelius element is inf-sup stable on three-dimensional Freudenthal meshes for every velocity degree $k\ge 4$.

Zhang's proof~\cite{Zhang2011} constructs a controlled divergence preimage of the pressure through a hierarchy of local lifting operators: the cell means of the pressure are matched first; its vertex values, edge traces, and face traces are then lifted in order of increasing dimension; and the remaining element-interior residual is removed last. The degree restriction to $k\ge 6$ enters at the edge-lifting stage, where a degree-six bubble function is used to correct the elementwise moments.

This paper builds on and strengthens Zhang's approach. 
We classify the edges of the Freudenthal mesh into seven geometric types and construct local lifting operators for each type. 
The construction separates trace realization from moment correction.
A preliminary lift prescribes the trace on the target edge. For $k=4$, this lift is constructed explicitly. For $k\ge5$, a  decomposition of the edge-trace space allows us to retain the quartic construction and lift the higher-order component separately.
We then correct the elementwise pressure moments while preserving edge traces. The quartic correction is again constructed explicitly, while for $k\ge 5$ we use a procedure called \emph{domino repair} to eliminate the moments successively along a chain of adjacent tetrahedra.

Several recent preprints also claim to resolve this conjecture~\cite{Alfyorov2026,Henry2026,LiangLiu2026}.
Our proof has been developed independently of these works, and our proof is more constructive and explicit.

The remaining parts are organized as follows. In \S 2, we state the main results and outline the main steps of the proof. The main ingredients of the proof: compatible edge traces and the construction of the local lifts are detailed in \S 3 and \S4, respectively. The proof of the main result is finished in \S 5. We postpone certain techinical results in the Appendix.
\section{Main theory and structure of the proof}
Let \(\Omega=(0,1)^3\). For $m\in\mathbb{N}_0$, we denote by $H^m(\Omega)$ the standard
Sobolev space equipped with the norm $\|\cdot\|_{H^m(\Omega)}$ and
seminorm $|\cdot|_{H^m(\Omega)}$. We further set
\[
\boldsymbol{H}^m(\Omega):=[H^m(\Omega)]^3,
\qquad
\boldsymbol{H}_0^1(\Omega):=[H_0^1(\Omega)]^3,
\]
and define the mean-zero space
\[
L_0^2(\Omega)
:=
\left\{
q\in L^2(\Omega)
\,\middle|\,
\int_\Omega q\,\mathrm{d}x=0
\right\}.
\]

We divide each coordinate interval into \(N\) equal subintervals of length \(h=1/N\). Each resulting cube \(O=x_O+h[0,1]^3\) is subdivided into the six tetrahedra
\[
 K_{O,\sigma}=x_O+h\conv
 \{0,e_{\sigma(1)},e_{\sigma(1)}+e_{\sigma(2)},(1,1,1)\},
 \qquad \sigma\in S_3,
\]
where \(e_1,e_2,e_3\) are the coordinate unit vectors and \(S_3\) is the
permutation group on three indices.  All cubes use the same orientation.
The resulting Freudenthal--Kuhn mesh
\cite{Freudenthal1942,Kuhn1960} is denoted by \(\Th\), and its set of
edges, including boundary edges, by \(\mathcal E_h\).  For a
dimension-independent construction, see \cite{FeifelFunken2024}.
For \(K\in\Th\), we write \(\mathcal V(K)\), \(\mathcal E(K)\), and
\(\mathcal F(K)\) for its vertices, edges, and faces, respectively.

For an integer $k\geq 1$, let
\[
\mathbb{P}_k(\mathcal{T}_h)
:=
\left\{v\in L^2(\Omega)\,\middle|\,
v|_T\in\mathbb{P}_k(T),\, T\in\mathcal{T}_h\right\},
\]
The velocity and pressure spaces of Scott--Vogelius element are
\begin{align}
\bm V_{h,k}&:=\left\{
\bm v\in\bm H_0^1(\Omega)
\,\middle|\,
\bm v|_K\in\Pp_k(K)^3,\, K\in\Th
\right\},\\
Q_{h,k-1}&:=\diver\bm V_{h,k}.
\end{align}
Thus \(Q_{h,k-1}\) denotes the actual divergence image, without an
identification with the full discontinuous piecewise-polynomial space.

A patch \(\omega\) is the interior of a union of tetrahedra, whose
collection is denoted by \(\mathcal T(\omega)\).  In particular,
\(\omega_e\) is the exact edge star formed by the tetrahedra containing
\(e\), so that
\[
 \mathcal T(\omega_e)=\{K\in\Th\,|\,e\in \mathcal E(K)\},
 \qquad m_e=\#\mathcal T(\omega_e).
\]
Here, $\#$ denotes the cardinality of a set. The same patch notation is used on reference meshes. On a reference patch \(\omega\), the velocity and pressure spaces are
denoted by \(\bm V_k(\omega)\) and
\(Q_{k-1}(\omega)\).
The argument \(\omega\) is omitted when no confusion can arise.

Throughout, \(C>0\) denotes a generic constant independent of \(N\), but possibly
depending on the fixed polynomial degree \(k\), and may vary from line to line. We are now ready to state the main result.

\begin{theorem}[Main result]
\label{thm:target}
For every fixed integer \(k\geq4\), there is a constant \(C>0\), such that every
\(q_h\in Q_{h,k-1}\) has a preimage \(\bm v_h\in \bm V_{h,k}\) satisfying
\begin{equation}
  \diver \bm v_h=q_h,
  \qquad
  \seminorm{\bm v_h}_{H^1(\Omega)}
  \leq C\norm{q_h}_{L^2(\Omega)}.
  \label{eq:stable-right-inverse}
\end{equation}
\end{theorem}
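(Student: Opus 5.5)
We follow the hierarchical lifting strategy of Zhang, strengthened at the edge stage. Given \(q_h\in Q_{h,k-1}\), we build \(\bm v_h=\bm v_0+\bm v_V+\bm v_E+\bm v_F+\bm v_I\) in successive stages. After each stage the residual \(q_h-\diver(\cdot)\) still lies in \(Q_{h,k-1}\), and it vanishes on progressively more of the mesh skeleton together with certain moments. Each stage is implemented by local operators on patches of bounded size, and each is bounded in \(H^1\) by \(C\norm{q}_{L^2}\) of the current residual on a slightly larger patch. Scaling from a finite family of reference patches gives an \(h\)-independent constant, and finite overlap sums the local bounds.

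The first stage uses a stable low-order Stokes pair, for example a Bernardi--Raugel or \(\Pp_2\)--\(\Pp_0\) type argument on the cubes and tetrahedra. It produces \(\bm v_0\) whose divergence matches all cell means of \(q_h\), with \(\seminorm{\bm v_0}_{H^1}\le C\norm{q_h}_{L^2}\). The next stage treats vertices. The characterization of the divergence image on Freudenthal meshes (continuity-type constraints at singular edges and vertices) makes the vertex values of the residual compatible, and we lift them by vertex-patch bubbles built from products of barycentric coordinates.

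The edge stage is the core. We classify edges into the seven geometric types determined by their direction and the number \(m_e\) of tetrahedra around them. For each type we construct, on the reference star \(\omega_e\), a lift \(\bm v\in\bm V_k(\omega_e)\) with two properties: \(\diver\bm v\) reproduces the prescribed trace of the residual on \(e\) (compatible by the previous stage), and it leaves all elementwise means, vertex values and the traces on other edges unchanged. We split this into a trace-realizing lift followed by a moment correction. For \(k=4\) both are written down explicitly using edge bubbles of degree \(4\), which replaces Zhang's degree-six bubble. For \(k\ge5\) we decompose the edge-trace space into the quartic part plus a higher-order complement, which vanishes to high order at the endpoints and can be lifted by \(\lambda_a\lambda_b\)-weighted bubbles. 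The leftover cell moments are then removed by a domino repair: we walk around the chain of tetrahedra of \(\omega_e\) and cancel the moment on \(K_j\) with a face bubble shared by \(K_j,K_{j+1}\). This pushes the defect forward, and it vanishes at the last tetrahedron because the total mean is zero, since the residual is a divergence.

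The face stage and the interior stage are then standard. On each face, a residual with zero traces on the face boundary is lifted by face bubbles of the two adjacent tetrahedra. The interior residual in \(\Pp_{k-1}(K)\cap L^2_0(K)\) vanishing on \(\partial K\) is inverted by the elementwise stable bubble Stokes problem, which is available for \(k\ge4\). The main obstacle is the edge stage for \(k=4,5\). There we must exhibit explicit solvable linear systems, one per edge type, with enough degrees of freedom after fixing the trace, and confirm that the compatibility conditions inherited from \(\diver\bm V_{h,k}\) are exactly the solvability conditions. I would first attack the singular (non-generic) edge types with small \(m_e\), since the constraint count is tightest there.
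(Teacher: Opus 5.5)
\textbf{Genuine gap at the edge stage for $k=4$.} Your architecture matches the paper's: Zhang's filtration, seven edge types, a trace lift followed by a moment correction, a quartic core plus $\beta_e$-weighted higher modes, and domino repair for $k\ge5$. The failure is in the decisive step. You require the corrected lift to lie in $\bm V_k(\omega_e)$ on the exact edge star, and you claim that for $k=4$ both the trace lift and the moment correction can be written there.

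On the body-diagonal and interior-coordinate six-stars this is impossible. Quartic fields in $\bm V_4(\omega_e)$ with zero divergence trace on every edge realize only the four-dimensional moment space $\{r\in\R^6:\mathbf 1\cdot r=0,\ \chi\cdot r=0\}$, where $\chi=(1,-1,1,-1,1,-1)$. Trace lifts, however, can carry a nonzero $\chi$-component. Equivalently, only an $11$-dimensional subspace of the $12$-dimensional $\mathcal G_e^4$ admits a zero-moment lift on $\omega_e$, so no explicit formulas on the exact star can close the argument.

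The missing idea is to enlarge the patch. The paper adjoins two tetrahedra which, together with two star tetrahedra, complete the four-star of an auxiliary square diagonal $e^\ast$. On that four-star it uses the edge field $d\lambda_a^2\lambda_b^2$, combined with quartic face fields $d\lambda_a^2\lambda_b\lambda_c$ that cancel its trace on $e^\ast$. The result is a field with zero traces on all edges and moment vector $(\chi,0,0)$. The boundary coordinate types are likewise repaired on enlarged patches at $k=4$.

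\textbf{The two-tetrahedron ridge fails even earlier, at every degree.} Here the trace lift itself cannot be built on the exact star. Any field in $\bm V_k(\omega_e)$ has the form $\lambda_A\lambda_B\lambda_D w^\pm$ on the two sides, where $[A,B,D]$ is the shared face, and $w^+(A)=w^-(A)$. Zero traces on the spokes $[A,D]$ force $w(A)\cdot\nabla\lambda_B^\pm=0$. This makes the two endpoint slopes of the target trace equal, although $S_e=\R^2$, so one extra tetrahedron is needed.

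\textbf{The domino field is unspecified, and a naive choice fails.} The domino step needs, on each face-adjacent pair, a field whose divergence has zero trace on every edge of both tetrahedra but nonzero opposite moments. A cubic face bubble $d\lambda_a\lambda_b\lambda_c$ cannot do this: killing its traces on the three edges of the shared face already forces $d=0$. The paper instead uses the quintic field $d_F\lambda_a\lambda_b^2\lambda_c^2$ with $d_F\perp\nabla\lambda_a$. This is well defined only because the two nonshared faces opposite $a$ are coplanar in the Freudenthal mesh, so that $\nabla\lambda_a^+=\nabla\lambda_a^-$. It is also why the domino is unavailable at $k=4$.

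\textbf{Your diagnosis of the difficulty is inverted.} The singular square-diagonal stars are the easy cases. Their quartic lifts use directions tangent to the supporting faces and therefore have zero element moments with no repair. The real obstruction sits at the large interior six-stars.
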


The condition \(q_h\in\diver \bm V_{h,k}\) guarantees an algebraic preimage, but
it gives no control of its \(H^1\) seminorm.  The entire problem is to choose
the preimage with the uniform bound in
\eqref{eq:stable-right-inverse}.

Here we recall the finite element space and several lemma discussed in \cite{Zhang2011}, which will be used in the subsequent analysis. All vertex, edge, and face conditions below are imposed element-side by
element-side, because a pressure in \(Q_{h,k-1}\) need not be continuous.
Define $Q_h^0 := Q_{h,k-1}$ and
\[
\begin{aligned}
Q_h^M &:= \Bigl\{q\in Q_h^0:
  \int_K q\,\dd x=0
  \quad\forall K\in\Th\Bigr\},\\
Q_h^V &:= \bigl\{q\in Q_h^M:
  q|_K(a)=0
  \quad\forall K\in\Th,\ a\in\mathcal V(K)\bigr\},\\
Q_h^E &:= \bigl\{q\in Q_h^V:
  q|_{K,e}\equiv0
  \quad\forall K\in\Th,\ e\in\mathcal E(K)\bigr\},\\
Q_h^F &:= \bigl\{q\in Q_h^E:
  q|_{K,F}\equiv0
  \quad\forall K\in\Th,\ F\in\mathcal F(K)\bigr\}.
\end{aligned}
\]
The proof proceeds through the following nested sequence of subspaces:
\begin{equation}
 Q_h^0\supset Q_h^M\supset Q_h^V
 \supset Q_h^E\supset Q_h^F.
 \label{eq:filtration}
\end{equation}

The following lemma, which abstracts the skeleton of Zhang’s proof~\cite{Zhang2011}, shows that it suffices to construct uniformly bounded residual corrections along the sequence \eqref{eq:filtration}.

\begin{lemma}
\label{lem:composition}
Set a fixed number \(s\). Let
\[
 Q^0_h=Q_0\supset Q_1\supset\cdots\supset Q_s=\{0\}
\]
be nested subspaces defined on a given $\Th$. Suppose that, for each \(j=1,\ldots,s\), there exists a linear map
\(R_j:Q_{j-1}\rightarrow\bm V_{h,k}\)
such that, for every \(q\in Q_{j-1}\),
\[
 q-\diver R_jq\in Q_j,
 \qquad
 \seminorm{R_jq}_{H^1}
 \leq C_j\norm{q}_{L^2},
\]
where each \(C_j\) is independent of \(h\) and \(N\).  Then the divergence operator has a bounded linear right inverse
whose norm is independent of \(N\).
\end{lemma}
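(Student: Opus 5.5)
We build the right inverse by composing the residual corrections $R_1,\dots,R_s$ telescopically. Given $q\in Q_0=Q_h^0$, set $q_0:=q$ and recursively define
\[
 \bm v_j:=R_j q_{j-1},\qquad q_j:=q_{j-1}-\diver \bm v_j,\qquad j=1,\dots,s.
\]
By hypothesis $q_{j-1}\in Q_{j-1}$ implies $q_j\in Q_j$, so an induction on $j$ shows that every step is well defined. At the last step $q_s\in Q_s=\{0\}$. We then define $Rq:=\sum_{j=1}^s\bm v_j\in\bm V_{h,k}$. Summing the defining relations telescopes to $q-\diver Rq=q_s=0$, so $\diver Rq=q$. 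Linearity of $R$ follows because each $q_j$ and each $\bm v_j$ is obtained from $q$ by compositions of the linear maps $R_j$, $\diver$, and subtraction.

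For the bound we also need to control the residuals $q_j$ in $L^2$. Since $\bm v_j$ is piecewise polynomial, we have pointwise $\lvert\diver\bm v_j\rvert\le\sqrt3\,\lvert\nabla\bm v_j\rvert$. Hence
\[
 \norm{\diver\bm v_j}_{L^2}\le\sqrt3\,\seminorm{\bm v_j}_{H^1},
\]
which gives
\[
 \norm{q_j}_{L^2}\le\norm{q_{j-1}}_{L^2}+\sqrt3\,C_j\norm{q_{j-1}}_{L^2}=(1+\sqrt3\,C_j)\norm{q_{j-1}}_{L^2}.
\]
Iterating this estimate yields $\norm{q_{j-1}}_{L^2}\le\prod_{i<j}(1+\sqrt3 C_i)\norm{q}_{L^2}$. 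Therefore
\[
 \seminorm{Rq}_{H^1}\le\sum_{j=1}^s C_j\prod_{i<j}(1+\sqrt3\,C_i)\,\norm{q}_{L^2}.
\]
Because $s$ is fixed and each $C_j$ is independent of $h$ and $N$, this constant is independent of $N$.

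The only delicate point is the residual growth. The corrections are applied to $q_{j-1}$, not to $q$ itself, so we must control $\norm{q_{j-1}}_{L^2}$ in terms of $\norm{q}_{L^2}$. This control comes from the divergence bound above together with the fact that $s$ is a fixed number, which keeps the product of constants bounded. The rest of the argument is bookkeeping.
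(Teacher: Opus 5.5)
Your telescoping composition is correct and is exactly the argument the lemma encodes. The paper states the lemma without a written proof, presenting it as an abstraction of Zhang's skeleton, and your construction matches it: the recursion $q_j=q_{j-1}-\diver R_jq_{j-1}$ with $Rq=\sum_j R_jq_{j-1}$, plus control of the intermediate residuals via $\norm{\diver\bm v}_{L^2}\le\sqrt3\,\seminorm{\bm v}_{H^1}$. That inequality holds for every $H^1$ field, so your appeal to the piecewise-polynomial structure is unnecessary.
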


The following proposition collects four residual correction stages from
Zhang's construction \cite{Zhang2011}.

\begin{proposition}
\label{prop:zhang-stages}
On the unit-cube Freudenthal family, the following estimates hold with
constants independent of \(N\).
\begingroup
\renewcommand{\theenumi}{\alph{enumi}}
\renewcommand{\labelenumi}{(\theenumi)}
\begin{enumerate}
\item If \(k\geq3\), there is a linear map
      \(R_M:Q_h^0\to \bm V_{h,k}\) such that, for every \(q\in Q_h^0\),
      \[
      q-\diver R_Mq\in Q_h^M,
      \qquad
      \seminorm{R_Mq}_{H^1}
      \leq C_k\norm{q}_{L^2}.
      \]
\item If \(k\geq3\), there is a linear map
      \(R_V:Q_h^M\to \bm V_{h,k}\) such that, for every \(q\in Q_h^M\),
      \[
      q-\diver R_Vq\in Q_h^V,
      \qquad
      \seminorm{R_Vq}_{H^1}
      \leq C_k\norm{q}_{L^2}.
      \]
\item If \(k\geq4\), there is a linear map
      \(R_F:Q_h^E\to \bm V_{h,k}\) such that, for every \(q\in Q_h^E\),
      \[
      q-\diver R_Fq\in Q_h^F,
      \qquad
      \seminorm{R_Fq}_{H^1}
      \leq C_k\norm{q}_{L^2}.
      \]
\item If \(k\geq4\), there is a linear map
      \(R_B:Q_h^F\to \bm V_{h,k}\) such that, for every \(q\in Q_h^F\),
      \[
      \diver R_Bq=q,
      \qquad
      \seminorm{R_Bq}_{H^1}
      \leq C_k\norm{q}_{L^2}.
      \]
      Moreover, \(R_Bq\) is assembled element by element from fields whose
      trace vanishes on the boundary of each tetrahedron.
\end{enumerate}
\endgroup
\end{proposition}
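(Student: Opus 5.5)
Each of the four maps will be built from local operators supported on patches of uniformly bounded overlap, with the $H^1$ bound obtained by scaling from a fixed reference configuration. Since all cubes of the Freudenthal mesh are translates of $h[0,1]^3$ split in the same way, there are only finitely many patch types up to translation, including boundary truncations. On each reference patch $\hat\omega$ I will use one fixed linear lifting $\hat L$ with constant $\hat C$. Pulling back gives $|L q|_{H^1(\omega)}\le \hat C\|q\|_{L^2(\omega)}$, because $|\cdot|_{H^1}$ of a velocity and $\|\cdot\|_{L^2}$ of its divergence both scale like $h^{1/2}$ in 3D. Summing over patches with finite overlap then yields the global bound independent of $N$. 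This is the same scaling mechanism as in Zhang's construction \cite{Zhang2011}. The four stages below each reduce to a finite-dimensional surjectivity statement on a reference patch.

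\textbf{Stage (a), means.} I take a spanning tree of the dual graph whose edges are interior faces. Transferring the mean from $K$ to a neighbour $K'$ uses the quadratic face bubble times a normal vector on the common face $F$. Its divergence integrates to $\pm|F|$ on the two sides, with the flux sign determined by the normal. The tree will be built cube-wise: inside each cube, a path through the six tetrahedra across interior faces; between cubes, a lexicographic sweep. Cell means are then pushed toward the root, and the root mean vanishes because $q\in L_0^2$. The difficulty is that naive sweeping accumulates sums of $O(N^3)$ means. I will therefore instead invoke the standard argument: on the cube scale, the $\mathbb P_0$-means projection is controlled by the continuous inf-sup condition (Bogovskiĭ) for $\Omega$. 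This is applied to the piecewise constant part of $q$, and the result is interpolated by a Fortin-type operator. The operator preserves cell means because the face-bubble degrees of freedom of $\mathbb P_2$ (hence of $\mathbb P_k$, $k\ge3$) fix face fluxes. This gives $R_M$ with a uniform constant.

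\textbf{Stage (b), vertex values.} For $q\in Q_h^M$ and each interior vertex $a$, the values $q|_K(a)$, $K\ni a$, satisfy the compatibility constraints inherited from $q\in\diver\bm V_{h,k}$. At a nonsingular vertex, the only constraint is a vanishing alternating-type sum. I will construct, on the vertex star (a fixed reference configuration, since all Freudenthal vertex stars are translates, with the boundary truncated), velocities vanishing on the star boundary whose divergences realize every compatible vector of vertex values and have zero cell means. Cell means are killed by adding interior cubic bubbles, which is available for $k\ge3$. This is a finite linear-algebra check done once. Scaling gives $R_V$; supports overlap boundedly, and the vertex values at other vertices are untouched since the local fields vanish to sufficient order there.

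\textbf{Stage (c), face traces.} On $Q_h^E$, face traces vanish on the face boundary. I will lift them using the two tetrahedra sharing the face, with velocities of the form $b_F\,\bm p$, where $b_F$ is the cubic face bubble. Here $k\ge4$ leaves degree $\ge1$ for $\bm p$, so that the normal derivative across $F$ reproduces each trace in the bubble-weighted space. Zero cell means and zero edge traces are preserved by construction. The step I expect to be hardest is verifying that the map from local face velocities to admissible face traces (with both one-sided traces and moments) is onto for $k=4$. I would first try an explicit basis computation on the reference pair of tetrahedra.

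\textbf{Stage (d), interior residual.} For $q\in Q_h^F$, each $q|_K$ vanishes on $\partial K$ and has zero mean. By the single-element result, $\diver$ maps $\mathbb P_k(K)^3\cap\bm H_0^1(K)$ onto $\{p\in\mathbb P_{k-1}(K): p|_{\partial K}=0,\ \int_K p=0\}$ for $k\ge4$. Using a fixed right inverse on the reference element and scaling, the elementwise sum gives $R_B$.
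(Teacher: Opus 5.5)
The paper does not reprove this proposition; it cites Zhang's Lemmas~3.1, 3.2, 3.4 and 3.5. Your stages (a) and (d) are sound as reconstructions, with two small corrections. In (a), 3D has no quadratic face bubbles; the cubic ones \(\lambda_a\lambda_b\lambda_c\) are what make \(k\geq3\) work. In (d), you only need every mean-zero \(p\in\Pp_{k-1}(K)\) vanishing on \(\partial K\) to have a bubble preimage; the image itself is larger.

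Stage (b) fails at the moment correction. No nonzero cubic vanishes on all of \(\partial K\), since the element bubble \(\lambda_0\lambda_1\lambda_2\lambda_3\) is quartic. In any case every \(w\) vanishing on \(\partial K\) has \(\int_K\diver w\,\dd x=\int_{\partial K}w\cdot n\,\dd s=0\), so interior bubbles cannot change cell means. A working version proceeds in two steps. First, at a vertex \(z\), use \(\lambda_z^2\bm w\) with \(\bm w\) continuous and piecewise linear on the vertex star. Taking \(\bm w(x)=\nabla u|_K(z)(x-z)\) on each \(K\), for a preimage \(u\) of \(q\), shows that every compatible vertex vector is attained. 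Hence a fixed right inverse exists and no characterization of the vertex constraints is needed, while the factor \(\lambda_z^2\) kills the divergence at all other vertices. Second, transfer the moments along a spanning tree of the star with cubic face bubbles \(\lambda_a\lambda_b\lambda_c\,n_F\) on interior faces, whose divergence vanishes at every vertex. This closes because the lift vanishes on the star boundary.

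Stage (c) has a genuine flaw in its design. Fields \(b_F\bm p\) do not preserve edge traces or means by construction. On the edge \(ab\) of \(K^\pm=[a,b,c,d^\pm]\) their divergence is \(\lambda_a\lambda_b\,\nabla\lambda_c^\pm\cdot\bm p^\pm\). Their moment on \(K^\pm\) is \(\pm\int_F b_F\,\bm p\cdot n_F\,\dd s\).

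More fundamentally, a face-by-face lift is impossible at \(k=4\). Any correction with zero edge traces and zero means has, on each \(K\), a cubic divergence vanishing on all six edges. That divergence therefore equals \(\sum_i c_i b_{F_i}\) with \(\sum_i c_i=0\), where \(F_i\) is the face opposite vertex \(i\) and \(b_{F_i}=\prod_{j\neq i}\lambda_j\). So it can never alter a single face-side trace alone. In fact, for linear \(\bm p\), imposing zero traces on the edges of \(F\) together with conformity across \(F\) reduces \(b_F\bm p\) to the two element bubbles.

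The missing idea is to work element by element with \(b_K\bm w\), where \(b_K=\lambda_0\lambda_1\lambda_2\lambda_3\) and \(\bm w\in\Pp_{k-4}(K)^3\). These fields have zero edge traces and zero means, and their trace on \(F_i\) is \((\nabla\lambda_i\cdot\bm w)\,b_{F_i}\).
\begin{itemize}
\item For \(k=4\), a constant \(\bm w\) with \(\nabla\lambda_i\cdot\bm w=c_i\) exists precisely because \(\sum_i\nabla\lambda_i=0\) and \(\sum_i c_i=0\) (the mean-zero condition). Hence \(q|_K=\diver(b_K\bm w)\) exactly.
\item For \(k\geq5\), one checks that \(\bm w\mapsto\bigl((\nabla\lambda_i\cdot\bm w)|_{F_i}\bigr)_i\) maps onto \(\prod_i\Pp_{k-4}(F_i)\). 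To see this, write \(\bm w\) in the basis dual to \(\nabla\lambda_1,\nabla\lambda_2,\nabla\lambda_3\), and use that on \(F_0\) every homogeneous form of degree \(m\geq1\) in \(\lambda_1,\lambda_2,\lambda_3\) lies in the ideal they generate.
\end{itemize}
Without this construction, or a citation of Zhang's Lemma~3.4 as the paper gives, your stage (c) is not established.
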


Parts (a)--(d) follow from Lemmas~3.1, 3.2, 3.4, and 3.5 of
\cite{Zhang2011}, respectively.
Each correction preserves the constraints imposed in the preceding
stages and satisfies an \(H^1\) bound independent of the mesh size.
The local coefficients can be chosen as fixed linear functions of
the residual, giving the linear maps stated above.

For every fixed \(k\geq4\),
Lemma~\ref{lem:composition} and Proposition~\ref{prop:zhang-stages}
reduce the proof of Theorem~\ref{thm:target} to the edge stage:
construct a linear operator
\(
 R_E:Q_h^V\rightarrow\bm V_{h,k}
\)
such that
\begin{equation}
 q-\diver R_Eq\in Q_h^E,
 \qquad
 \seminorm{R_Eq}_{H^1(\Omega)}\leq C\norm{q}_{L^2(\Omega)},
 \label{eq:edge-task}
\end{equation}
hold for every \(q\in Q_h^V\). Here coefficient \(C\) is independent of \(N\).   

\section{Compatible edge traces}
\label{sec:edge-data}

\subsection{Classification of Freudenthal edge stars}
\label{sec:edge-classification}

The edge stage starts with a residual \(q\in Q_h^V\).  Although \(q\) may jump between tetrahedra, it is the divergence of a conforming velocity, and the tangential derivatives of that velocity agree across interior faces and vanish on boundary faces.
The problem is therefore to characterize the possible traces of
\(q\) on a target edge \(e\), collected in the vector
\begin{equation}
 \Gamma_eq:=\bigl(q|_{K,e}\bigr)_{K\in\mathcal T(\omega_e)}.
 \label{eq:edge-trace-vector}
\end{equation}

Freudenthal edges are classified as coordinate edges, square diagonals, or body diagonals. Refining this directional classification by the position of the edge relative to the physical boundary gives seven edge-star geometries. A coordinate edge gives four types: one interior type and three boundary types, namely a one-tetrahedron ridge, a two-tetrahedron ridge, and a one-plane coordinate edge. A square diagonal gives two types, one interior and one boundary. A body diagonal is always an interior edge. 

\begin{figure}[t]
\centering
\resizebox{0.6\textwidth}{!}{%
  \input{tikz/freudenthal-box.tex}%
}
\caption{The Freudenthal subdivision of \([0,2]^2\times[0,1]\).
Red edges (a)--(g) represent the seven edge-star types listed below.}
\label{fig:edge-star-representatives}
\end{figure}

Figure~\ref{fig:edge-star-representatives} displays all seven types in a
single layer of four cubes, covering \([0,2]^2\times[0,1]\).
Each highlighted edge is shown with its complete incident star.
The table identifies these edges using \(ijk=(i,j,k)\).
The values of \(m_e\) follow from a cube-by-cube count.
Identify each cube separately with \([0,1]^3\) by translation and scaling.
Its six Freudenthal tetrahedra correspond to the six orderings of the
coordinate increments from the local vertex \(000\) to \(111\).
A coordinate edge having \(000\) or \(111\) as an endpoint occurs as the
first or last increment, respectively. Since the remaining two increments
can be ordered in two ways, such an edge belongs to exactly two
tetrahedra within that cube.
Every other coordinate edge occurs as the middle increment and determines
the entire ordering uniquely, so it belongs to exactly one tetrahedron
within the cube.
Equivalently, the contribution is two when the two fixed local transverse
coordinates are \((0,0)\) or \((1,1)\), and one when they are
\((0,1)\) or \((1,0)\).
A ridge edge meets only one cube, giving \(m_e=1\) or \(2\).
A one-plane coordinate edge meets two cubes sharing a face containing
the edge. Across their common face, the local transverse coordinate
normal to that face switches between \(0\) and \(1\), while the other
transverse coordinate remains unchanged.
Thus, up to exchanging the transverse coordinates, the edge has local
transverse coordinates \((\varepsilon,1)\) in one cube and
\((\varepsilon,0)\) in the other, where \(\varepsilon\in\{0,1\}\).
Exactly one of these pairs has equal entries, whereas the other has
distinct entries. Hence one cube contributes two tetrahedra and the
other contributes one, giving \(m_e=2+1=3\).
An interior coordinate edge meets four cubes and realizes all four
transverse configurations, giving \(m_e=2+1+1+2=6\).
Each square diagonal belongs to exactly two tetrahedra per incident cube,
corresponding to the two orders of the coordinate increments within
its square face. Since boundary and interior square diagonals meet
one and two cubes, respectively, their counts are \(m_e=2\) and \(4\).
Finally, a body diagonal belongs to all six tetrahedra of its unique
containing cube, giving \(m_e=6\).

\begin{table}[htbp]
\centering
\caption{Classification of edges and their representatives.}
\label{tab:freudenthal-edge-types}
\begin{tabular}{@{}l@{\hspace{1.5em}}c@{\hspace{1.2em}}c@{}}
\toprule
Geometric type & Representative & \(m_e\)\\
\midrule
One-tetrahedron ridge
 & \(\text{(a)}=[200\;201]\) & 1\\
Two-tetrahedron ridge
 & \(\text{(b)}=[000\;001]\) & 2\\
One-plane coordinate edge
 & \(\text{(c)}=[100\;101]\) & 3\\
Interior coordinate
 & \(\text{(d)}=[110\;111]\) & 6\\
Boundary square diagonal
 & \(\text{(e)}=[101\;211]\) & 2\\
Interior square diagonal
 & \(\text{(f)}=[100\;111]\) & 4\\
Body diagonal
 & \(\text{(g)}=[010\;121]\) & 6\\
\bottomrule
\end{tabular}
\end{table}

Within a given edge class,  stars with different orientations are often congruent. For example, the stars of interior coordinate edges parallel to the three coordinate axes can be mapped onto one another by translations and rotations. The same holds for body-diagonal, square-diagonal, and two-tetrahedron ridge stars, so one representative of each class suffices.

The one-plane coordinate stars form two mirror-image configurations under
translations and rotations, shown in
Figure~\ref{fig:one-plane-coordinate-stars}.  A reflection identifies the
two configurations and transfers the lifting and repair operators by a
change of variables.  Thus only one representative is needed here as well.
The star of a one-tetrahedron ridge consists of a single tetrahedron.

\begin{figure}[th]
\centering
\begin{minipage}[b]{0.5\textwidth}
\centering
\input{tikz/a3-06-star.tex}
\par\smallskip
{\footnotesize\textbf{(a)} \(z\)-directed star}
\end{minipage}\hspace{-0.1\textwidth}%
\begin{minipage}[b]{0.5\textwidth}
\centering
\input{tikz/a3-05-star.tex}
\par\smallskip
{\footnotesize\textbf{(b)} \(x\)-directed star}
\end{minipage}
\caption{The two mirror-image one-plane coordinate edge stars on \(y=0\).}
\label{fig:one-plane-coordinate-stars}
\end{figure}

The Freudenthal mesh has a geometric property: every pair of
face-adjacent tetrahedra has a pair of coplanar nonshared faces, one from each
tetrahedron \cite[p.~680]{Zhang2011}. By spatial symmetry, it is enough
to check two configurations: the tetrahedra lie either in the same cube or in
two neighboring cubes.

According to the position of the target edge \(e\) relative to the pair of
coplanar nonshared faces, the seven edge stars fall into three classes. If \(e\)
is not a common edge of the two coplanar faces, it is \emph{nonsingular}; this
class consists of the two-tetrahedron ridge, the one-plane coordinate edge, the
interior coordinate edge, and the body diagonal.  If \(e\) is a common
edge of the two coplanar faces, it is \emph{singular}; the boundary and interior
square diagonals are the two such stars. The remaining type is the one-tetrahedron ridge.

We now derive the constraints on the element-side traces  along each edge. Choose \(v\in \bm V_{h,k}\) such that \(\diver v=q\).
Fix \(x\in\operatorname{relint}(e)\) and a scalar component
\(w=v_\ell\). Set
\[
 r_K:=\nabla(w|_K)(x),\qquad K\in\mathcal T(\omega_e).
\]
For any face \(F\), let \(T_F\) denote the two-dimensional vector space
parallel to \(F\). If \(K\) and \(L\) share an interior face \(F\) containing \(e\), then
the traces of \(w|_K\) and \(w|_L\) agree on \(F\). Differentiating their
common trace in tangential directions gives
\begin{equation}
 (r_K-r_L)\cdot\tau=0,
 \qquad \tau\in T_F.
 \label{eq:tangential-gradient-continuity}
\end{equation}
If \(F\subset\partial\Omega\) is a boundary face of \(K\) containing \(e\),
then \(w|_F=0\) and
\begin{equation}
 r_K\cdot\tau=0,
 \qquad \tau\in T_F.
 \label{eq:boundary-tangential-gradient-zero}
\end{equation}

We first consider an interior square diagonal \(e\). Number its tetrahedra cyclically as
\(K_1,K_2,K_3,K_4\). The four interface faces
through \(e\) alternate between two distinct planes \(P\) and \(Q\).
Let \(n_P\) and \(n_Q\) be normals to \(P\) and \(Q\), respectively. The common
relation \eqref{eq:tangential-gradient-continuity} gives, for some scalars
\(a,b,c,d\in\R\),
\[
\begin{aligned}
 r_{K_1}-r_{K_2} &= a n_P,&\qquad
 r_{K_2}-r_{K_3} &= b n_Q,\\
 r_{K_3}-r_{K_4} &= c n_P,&\qquad
 r_{K_4}-r_{K_1} &= d n_Q.
\end{aligned}
\]
Adding the four identities gives
\((a+c)n_P+(b+d)n_Q=0\).  Since \(n_P\) and \(n_Q\) are linearly independent,
\(c=-a\) and \(d=-b\), and therefore
\[
 r_{K_1}-r_{K_2}+r_{K_3}-r_{K_4}=0.
\]
Repeating this argument for each scalar component of \(v\) and summing the
diagonal derivatives gives the checkerboard relation
\[
 (\diver v)|_{K_1,e}(x)-(\diver v)|_{K_2,e}(x)
 +(\diver v)|_{K_3,e}(x)-(\diver v)|_{K_4,e}(x)=0.
\]
 For a boundary square diagonal, a similar argument based on
\eqref{eq:tangential-gradient-continuity} and
\eqref{eq:boundary-tangential-gradient-zero} yields
\[
 (\diver v)|_{K_1,e}(x)=(\diver v)|_{K_2,e}(x).
\]

Finally, suppose that \(e\) is a one-tetrahedron ridge with
\(\mathcal T(\omega_e)=\{K\}\).  The two boundary faces of \(K\) through \(e\) lie
in distinct planes.  Equation \eqref{eq:boundary-tangential-gradient-zero}
makes \(r_K\) parallel to both independent face normals, so
\((\diver v)|_{K,e}(x)=0\).

The preceding relations constrain the element-side divergence
values on each edge. We encode these constraints in the linear space
\begin{equation}
 S_e:=
 \begin{cases}
  \R^{m_e},
  &\text{nonsingular edge},\\[1mm]
  \{(z,z):z\in\R\},
  &\text{boundary square diagonal},\\[1mm]
  \{z\in\R^4:z_1-z_2+z_3-z_4=0\},
  &\text{interior square diagonal},\\[1mm]
  \{ 0\},
  &\text{one-tetrahedron ridge}.
 \end{cases}
 \label{eq:Se-def}
\end{equation}
Here each component corresponds to a tetrahedron incident to \(e\). In particular,
 the components \(z_1,\ldots,z_4\)
follow the cyclic ordering for the interior square diagonal case. Thus, for every \(v\in \bm V_{h,k}\), there holds
\begin{equation}
 \bigl((\diver v)|_{K,e}(x)\bigr)_{K\in\mathcal T(\omega_e)}
 \in S_e.
 \label{eq:pointwise-edge-compatibility}
\end{equation}

\subsection{The compatible edge-trace space}
\label{sec:edge-coordinates}

For a linear space \(S\), let \(\Pp_r(e;S)=S\otimes\Pp_r(e)\) denote the \(S\)-valued
polynomials of degree at most \(r\).  For \(e=[A,B]\), set
\begin{equation*}
  \begin{aligned}
  \Pp_r^0(e;S)&:=\{g\in\Pp_r(e;S):g(A)=g(B)=0\}.
  \end{aligned}
\end{equation*}
The compatible polynomial edge-trace space is
\begin{equation}
 \mathcal G_e^k
 :=
 \Pp_{k-1}^0(e;S_e).
 \label{eq:admissible-data-space}
\end{equation}

\begin{lemma}
\label{lem:residual-admissible}
For every \(q\in Q_h^V\) and every geometric edge \(e\in\mathcal E_h\),
 $\Gamma_eq\in\mathcal G_e^k$.
\end{lemma}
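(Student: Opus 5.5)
The claim asks for three things: each component of \(\Gamma_eq\) is a polynomial of degree at most \(k-1\) on \(e\), it vanishes at both endpoints, and at every point of \(e\) it lies in \(S_e\). I would check them in that order, with \(e=[A,B]\) a fixed geometric edge and \(\mathcal T(\omega_e)\) its star as in Table~\ref{tab:freudenthal-edge-types}.

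\emph{Polynomial degree.} Since \(q\in Q_h^V\subset Q_{h,k-1}=\diver\bm V_{h,k}\), each \(q|_K\) is the divergence of a polynomial in \(\Pp_k(K)^3\), so \(q|_K\in\Pp_{k-1}(K)\). Its restriction to the segment \(e\subset K\), in the arc-length parameter of \(e\), is therefore in \(\Pp_{k-1}(e)\). Hence \(\Gamma_eq\in\Pp_{k-1}(e;\R^{m_e})\), one component per incident tetrahedron.

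\emph{Endpoint values.} The endpoints \(A,B\) are vertices of every \(K\in\mathcal T(\omega_e)\). The defining condition of \(Q_h^V\) gives \(q|_K(A)=q|_K(B)=0\) element by element, so every component vanishes at \(A\) and \(B\).

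\emph{Values in \(S_e\).} Choose \(v\in\bm V_{h,k}\) with \(\diver v=q\); one exists because \(q\in Q_{h,k-1}\). By \eqref{eq:pointwise-edge-compatibility}, for every \(x\in\operatorname{relint}(e)\) the vector \((q|_{K,e}(x))_K\) lies in \(S_e\). At the endpoints the vector is \(0\in S_e\), since \(S_e\) is a linear subspace. So the vector-valued polynomial \(g=\Gamma_eq\) takes values in \(S_e\) at every point of \(e\).

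A polynomial in \(\Pp_{k-1}(e;\R^{m_e})\) with all values in the subspace \(S_e\) belongs to \(S_e\otimes\Pp_{k-1}(e)\). To see this, pick a basis \(\ell_1,\dots,\ell_p\) of the annihilator of \(S_e\). Each \(\ell_i\circ g\) is a scalar polynomial that vanishes identically on \(e\), so it is zero as a polynomial. Expanding \(g\) in monomials, every coefficient vector then lies in \(\bigcap_i\ker\ell_i=S_e\). Combining this with the endpoint step gives \(\Gamma_eq\in\Pp^0_{k-1}(e;S_e)=\mathcal G_e^k\).

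\emph{Main obstacle.} The only substantive point is that \eqref{eq:pointwise-edge-compatibility} holds for all seven star types. The text derives it explicitly only for the square diagonals and the one-tetrahedron ridge. For nonsingular edges it is trivial, because \(S_e=\R^{m_e}\). For the boundary square diagonal, I would carry out the sketched argument explicitly: the two tetrahedra share one interior face through \(e\), and each has a boundary face through \(e\), both lying in one boundary plane \(P\). Relations \eqref{eq:boundary-tangential-gradient-zero} give \(r_{K_1},r_{K_2}\parallel n_P\). Relation \eqref{eq:tangential-gradient-continuity} across the interior face, whose plane \(Q\neq P\), gives \(r_{K_1}-r_{K_2}\parallel n_Q\). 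Independence of \(n_P\) and \(n_Q\) forces \(r_{K_1}=r_{K_2}\). Summing the diagonal derivatives over the scalar components \(w=v_\ell\) then yields \(q|_{K_1,e}=q|_{K_2,e}\) pointwise.
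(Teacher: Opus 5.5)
Your proof is correct and follows the same route as the paper. The degree bound comes from \(q|_K\in\Pp_{k-1}(K)\), endpoint vanishing from the definition of \(Q_h^V\), and pointwise membership in \(S_e\) from \eqref{eq:pointwise-edge-compatibility} applied to a preimage \(v\); your annihilator argument identifying \(S_e\)-valued polynomials with \(S_e\otimes\Pp_{k-1}(e)\), and your explicit boundary-square-diagonal computation, fill in what the paper compresses into ``equivalently'' and ``a similar argument.''
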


\begin{proof}
  Equivalently, \(g\in\mathcal G_e^k\) if and only if
\[
 g\in[\Pp_{k-1}(e)]^{m_e},
 \qquad
 g(A)=g(B)=0,
 \qquad
 g(x)\in S_e\quad(x\in e).
\]
Because \(q\in Q_{h,k-1}=\diver \bm V_{h,k}\), choose
\(u\in \bm V_h^k\) with \(\diver u=q\).  Each component
\(q|_{K,e}\) belongs to \(\Pp_{k-1}(e)\).  Since \(q\in Q_h^V\), its
element-side vertex values vanish
\[
 q|_{K,e}(A)=q|_{K,e}(B)=0,
 \qquad K\in\mathcal T(\omega_e).
\]
Moreover, \eqref{eq:pointwise-edge-compatibility} gives
\[
 \Gamma_eq(x)
 =\bigl((\diver u)|_{K,e}(x)\bigr)_{K\in\mathcal T(\omega_e)}
 \in S_e,
 \qquad x\in e.
\]
Therefore \(\Gamma_eq\in\mathcal G_e^k\).
\end{proof}

Lemma \ref{lem:residual-admissible} shows that the edge traces of
residual pressures belong to \(\mathcal G_e^k\).
Below we construct local lifting operators that realize each
\(g\in\mathcal G_e^k\) as the divergence trace of a velocity field
.
This construction will also show that the compatibility conditions
in \eqref{eq:Se-def} are sharp.

Choose an orientation \(e=[A,B]\), write \(t=\lambda_B\) on \(e\), and set
\[
 \beta_e(t):=t^2(1-t)^2=\lambda_A^2\lambda_B^2.
\]
For \(g=g(t)\in\mathcal G_e^k\), define
\(
 z_A=g'(0),\ z_B=-g'(1)
\).  Since \(S_e\) is
linear, \(z_A,z_B\in S_e\).  Set
\begin{equation}
 \begin{aligned}
 g^{(4)}(t)
 &:=
 t(1-t)\bigl((1-t)z_A+t z_B\bigr)\\
 &=
 \lambda_A^2\lambda_B z_A
 +\lambda_A\lambda_B^2 z_B.
 \end{aligned}
 \label{eq:quartic-core-formula}
\end{equation}
The polynomial \(g^{(4)}\) belongs to \(\mathcal G_e^4\) and has the same
endpoint derivatives as \(g\).  Hence \(g-g^{(4)}\) has a double zero at
both endpoints.  For \(k\geq5\), there is therefore a unique
\(r_g\in\Pp_{k-5}(e;S_e)\) such that
\begin{equation}
 g=g^{(4)}+\beta_e r_g.
 \label{eq:scalar-edge-mode-decomposition}
\end{equation}
Thus, we obtain the direct sum decomposition of \(\mathcal G_e^k\):
\begin{equation}
 \mathcal G_e^k
 =
 \mathcal G_e^4
 \oplus
 \beta_e\Pp_{k-5}(e;S_e),
 \qquad k\geq5.
 \label{eq:admissible-decomposition}
\end{equation}
This decomposition is used for local construction below.  The quartic core
is handled by the quartic lift--repair templates, while the higher-order component is
handled by another construction.

For \(g=(g_K)_{K\in\mathcal T(\omega_e)}\in\mathcal G_e^k\), set the edge-trace norm
\begin{equation}
 \norm{g}_{0,e}^2
 :=
 \sum_{K\in\mathcal T(\omega_e)}
 \norm{g_K}_{L^2(e)}^2.
 \label{eq:natural-edge-norm}
\end{equation}
The global stability estimate in Section~\ref{sec:edge-right-inverse}
will follow from this norm and the trace inverse
inequality.

\section{{Construction of local lifting operators}}
\label{sec:local-edge-construction}

Similar to Zhang's construction \cite{Zhang2011}, our lifting operators
separate trace realization from moment correction.
A preliminary lift is first constructed whose divergence has the
prescribed trace on the target edge and vanishes on all other edges.
Its divergence moments are then cancelled by a correction
that preserves edge traces.

For \(v\in\boldsymbol V_k(\omega)\), define
\[
 \begin{aligned}
 E_\omega v
 &={}
 \bigl((\diver v)|_{K,f}\bigr)_{
   K\in\mathcal T(\omega),\ f\in\mathcal E(K)},\\
 M_\omega v
 &={}
 \left(\int_K\diver v\,\dd x\right)_{K\in\mathcal T(\omega)}.
 \end{aligned}
\]
When the patch is clear, we write \(E\) and \(M\).
All patches in this section are fixed reference templates, and hats are omitted.

\subsection{Quartic lifts}

By \eqref{eq:quartic-core-formula}, the quartic core is generated by the two
endpoint modes \(\lambda_A^2\lambda_B\) and
\(\lambda_A\lambda_B^2\).  The following quartic face fields are the local
finite-element building blocks used to lift these modes.

Suppose two patch tetrahedra \(K^\pm\) share the interior face
\(F=[a,b,c]\).  For a vector \(d\in\R^3\), define the quartic face field
\begin{equation}
 \Phi_F^a(d)
 :=
 \begin{cases}
  d\lambda_a^2\lambda_b\lambda_c,
  &\text{on }K^+\cup K^-,\\
  0,&\text{elsewhere in the patch}.
 \end{cases}
 \label{eq:quartic-face-field}
\end{equation}
The two polynomial pieces agree on \(F\), and the field vanishes on every
outer face of the pair.  Thus its zero extension is conforming.  Direct
differentiation gives
\begin{equation}
\begin{aligned}
 (\diver\Phi_F^a(d))|_{K,ab}&=(d\cdot\nabla\lambda_c^K)\lambda_a^2\lambda_b,
 \\
 (\diver\Phi_F^a(d))|_{K,ac}&=(d\cdot\nabla\lambda_b^K)\lambda_a^2\lambda_c,
\end{aligned}
 \label{eq:quartic-face-edge-traces}
\end{equation}
and the divergence trace on every other edge of each tetrahedron is zero.  On a unit Freudenthal tetrahedron,
\begin{equation}
 \int_K\diver\Phi_F^a(d)\,\dd x
 =
 \frac1{360}
 d\cdot
 (\nabla\lambda_a^K+\nabla\lambda_b^K+\nabla\lambda_c^K).
 \label{eq:quartic-face-moment}
\end{equation}

\begin{lemma}
\label{lem:quartic-clean-lift}
For each of the seven edge-star geometries, there exist a reference patch
\(\omega_e^{\mathrm{pre}}\) and a linear operator
\[
 L_e^4:\mathcal G_e^4
 \longrightarrow
 \boldsymbol V_4(\omega_e^{\mathrm{pre}}).
\]
For every \(g\in\mathcal G_e^4\), the operator satisfies
\begin{equation}
\begin{aligned}
 (\diver L_e^4 g)|_{K,e}&=g_K,
 &&K\in\mathcal T(\omega_e),\\
 (\diver L_e^4 g)|_{K,f}&=0,
 &&K\in\mathcal T(\omega_e^{\mathrm{pre}}),\ f\in\mathcal E(K)\setminus\{e\}.
\end{aligned}
\label{eq:the-divergence-trace-conditions}
\end{equation}
\end{lemma}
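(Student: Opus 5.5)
By linearity and \eqref{eq:quartic-core-formula}, it suffices to lift the two endpoint modes \(\lambda_A^2\lambda_B z\) and \(\lambda_A\lambda_B^2 z\) for \(z\) running through a basis of \(S_e\). Exchanging \(A\) and \(B\) swaps the two modes, so I treat only \(\lambda_A^2\lambda_B z\) and define \(L_e^4\) by linear extension. The building blocks are the quartic face fields \eqref{eq:quartic-face-field}. Their edge traces are given exactly by \eqref{eq:quartic-face-edge-traces}, so every trace condition in \eqref{eq:the-divergence-trace-conditions} reduces to finite-dimensional linear algebra on the gradients \(\nabla\lambda^K\) of the reference tetrahedra.

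\emph{Step 1 (clean pair fields).} Let \(F=[A,B,c]\) be an interior face through \(e\), shared by \(K^\pm\). I choose \(d=c-A\). Then \(\lambda_B\) is constant along \(Ac\), so \(d\cdot\nabla\lambda_B^{K^\pm}=0\) on both sides, because \(\lambda_B\) agrees across \(F\). Hence the trace of \(\diver\Phi_F^A(d)\) on \(Ac\) vanishes. On \(e=AB\) the trace equals \((d\cdot\nabla\lambda_c^{K})\lambda_A^2\lambda_B=\lambda_A^2\lambda_B\) for both \(K=K^\pm\). All other edge traces vanish, so this field lifts \(z=\mathbf 1_{K^+}+\mathbf 1_{K^-}\) cleanly inside \(\omega_e\). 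For an interior edge star with \(m_e\) tetrahedra arranged cyclically, these pair vectors span the alternating-sum-zero subspace when \(m_e\) is even. For the boundary stars (a path of tetrahedra) they span the sum-type subspace of dimension \(m_e-1\). This already yields exactly \(S_e\) for the interior square diagonal (\(m_e=4\), checkerboard) and for the boundary square diagonal (the vector \((1,1)\)). For the one-tetrahedron ridge, \(S_e=\{0\}\) and \(L_e^4=0\). Both these observations also confirm the sharpness claim for the singular stars.

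\emph{Step 2 (the missing direction for nonsingular edges).} For the two-tetrahedron ridge, the one-plane coordinate edge, the interior coordinate edge and the body diagonal, one more direction is needed: a lift whose \(e\)-trace vector lies outside the span from Step 1, for instance supported on a single \(K\). For this I use \(\Phi_F^A(d)\) with a general \(d\). It produces the \(e\)-trace vector \((d\cdot\nabla\lambda_c^{K^\pm})\), which can be unbalanced between \(K^+\) and \(K^-\). It also produces a spurious trace \((d\cdot\nabla\lambda_B^{K^\pm})\lambda_A^2\lambda_c\) on the edge \(Ac\), whose jump across \(F\) is a multiple of \(d\cdot n_F\). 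Here the coplanar nonshared faces property \cite[p.~680]{Zhang2011} enters. For a nonsingular \(e\), I pick the pair \(K^\pm\) so that \(Ac\) is the common edge of the two coplanar faces. Then the spurious \(Ac\)-trace vector can be written as a combination of the clean pair vectors of Step 1 for the edge \(Ac\), taken in its own star, with \(Ac\) playing the role of the target edge and \(\lambda_A^2\lambda_c\) the mode. Subtracting those clean \(Ac\)-fields removes the spurious trace without touching \(e\), since their \(e\)-trace is zero when the face does not contain \(B\). The patch \(\omega_e^{\mathrm{pre}}\) is then \(\omega_e\) together with the star of \(Ac\). This enlargement is why the lemma allows a patch larger than \(\omega_e\).

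\emph{Step 3 (verification).} For each of the four nonsingular representatives in Table~\ref{tab:freudenthal-edge-types}, I check two things explicitly with the reference gradients. First, the resulting \(e\)-trace vector is not in the span from Step 1, so together they span \(\mathbb R^{m_e}=S_e\). Second, the cancellation on \(Ac\) is consistent, meaning the spurious vector lies in the compatible space of \(Ac\). The linear operator is then assembled from a fixed right inverse of this finite system.

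I expect the main obstacle to be Step 2: ensuring, case by case, that the correction fields used to cancel the \(Ac\)-trace do not create new nonzero traces on further edges in a non-terminating chain. If the direct choice fails for some type, I would try the vertex \(B\) in place of \(A\), or a different face of the star. Failing that, I would combine two general face fields whose spurious traces cancel on a shared edge.
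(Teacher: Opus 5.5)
Your Step~1 is correct and matches the paper's construction for the singular stars: the paper's directions $d_i^A$, $d_i^B$ and $\mathbf e$, $\mathbf e_x$ are exactly the edge vectors $c-A$, $c-B$. There is a genuine gap in Step~2, at the two-tetrahedron ridge.

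Your reduction to the mode $\lambda_A^2\lambda_B$ ``by exchanging $A$ and $B$'' is not legitimate. Step~2 needs an interior face $F=[A,B,c]$ whose coplanar nonshared faces meet along the edge through the endpoint that carries the square, and this condition is not symmetric in $A$ and $B$. Take $A=000$, $B=001$, $K^+=[A,D,U,B]$, $K^-=[A,D,V,B]$. The only interior face is $[A,B,D]$, and its coplanar pair $[B,D,U]$, $[B,D,V]$ lies in $z=1$, so it passes through $BD$.
\begin{itemize}
\item The $B$-mode is fine. Since $\nabla\lambda_A^\pm=-\mathbf e_z$, the fields $\Phi^B_{ABD}(\mathbf e_x)$ and $\Phi^B_{ABD}(\mathbf e_y)$ are clean and unbalanced.
\item The $A$-mode fails. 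Here $\lambda_D^+=y$, $\lambda_D^-=x$, $\lambda_B^+=z-x$, $\lambda_B^-=z-y$. Hence $\Phi^A_{ABD}(d)$ has $e$-trace $(d_y,d_x)$ and $AD$-trace $(d_z-d_x,d_z-d_y)$ on $(K^+,K^-)$.
\end{itemize}
Any unbalanced $e$-trace therefore forces an $AD$-trace with unequal entries on two adjacent tetrahedra of the six-cycle star of the body diagonal $AD$. Its alternating sum is nonzero, so it is \emph{not} in the span of the clean pair vectors of $AD$, and the cancellation you claim is impossible. Your Step~3 test (``the spurious vector lies in the compatible space of $Ac$'') cannot detect this, because $S_{AD}=\R^6$. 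The relevant condition is membership in the pair span.

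The missing idea is to apply your Step~2 once more, with $AD$ as the target edge and a face outside the exact star. The face $[A,U,D]$ is shared by $K^+$ and $K^0=[A,100,D,U]$. Its coplanar pair $[A,U,B]$, $[A,U,100]$ lies in $y=0$, so $\Phi^A_{AUD}(\mathbf e_x)$ has $AD$-trace $\lambda_A^2\lambda_D$ on $K^+$ only and no $AU$-trace. Then $V_*=\Phi^A_{ABD}(\mathbf e_y+\mathbf e_z)-\Phi^A_{AUD}(\mathbf e_x)$ has $e$-trace supported on $K^+$ and no other edge traces. This is the paper's construction, and it is why the paper adjoins exactly one tetrahedron for this type. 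Your enlargement by the star of $AD$ (the whole cube) would contain $K^0$, but what you need there is this unbalanced clean $AD$-lift, not pair fields.

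Conversely, in the cases where your choice of face exists, no enlargement is needed. These are:
\begin{itemize}
\item the body diagonal, using the faces through $C_1,C_3,C_5$ for $A$ and through $C_0,C_2,C_4$ for $B$;
\item the interior coordinate edge, by the Piola map;
\item the one-plane coordinate edge (with $A=101$, $B=102$), using $[A,B,112]$, whose pair lies in $z=y+1$, and $[A,B,212]$, whose pair lies in $z=2$.
\end{itemize}
In each of these, $\nabla\lambda_B^+=\nabla\lambda_B^-$ makes the spurious $Ac$-trace the balanced pair vector of $F$ itself. Cancelling it just replaces $d$ by a direction with $d\cdot\nabla\lambda_B=0$, inside $\omega_e$. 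So the enlargement you attribute to these cases is unnecessary, while the one case that genuinely needs it is left to an unspecified fallback.
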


Here \(\omega_e^{\mathrm{pre}}\) is the patch supporting the preliminary
lift.  For all edge types except the two-tetrahedron ridge, we take
\(\omega_e^{\mathrm{pre}}\) as the exact edge star \(\omega_e\).  For the two-tetrahedron ridge,
one adjacent tetrahedron must be added to \(\omega_e\).
We first construct quartic lifts for the three types of interior edge stars.
The remaining boundary constructions are given in
Appendix~\ref{app:boundary-quartic-templates}.

\subsubsection*{Body-diagonal and interior-coordinate stars.} These edge stars each consist of six tetrahedra and are affinely equivalent.
We construct the quartic lift on the body-diagonal star and obtain the
interior-coordinate cases by the associated contravariant Piola transform. Figure \ref{fig:regular-six-stars} shows both types of edge stars.  

Set \(A=000\), \(B=111\), and name the six vertices in cyclic order by
\[
 C_0=100,\quad C_1=110,\quad C_2=010,\quad
 C_3=011,\quad C_4=001,\quad C_5=101.
\]
Number the tetrahedra around the body diagonal \(e=[A,B]\) cyclically by
\begin{equation}
 K_i=[A,B,C_i,C_{i+1}],\quad F_i=[A,B,C_i],\quad 0\leq i\leq5,
\label{eq:body-six-star}
\end{equation}
with indices modulo six.  Define the twelve endpoint fields by
\begingroup
\renewcommand{\arraystretch}{1.25}
\setlength{\arraycolsep}{4pt}
\begin{equation}
\begin{array}{lll}
 \ell_{A,0}=\Phi_{F_0}^A(\mathbf e_x)-\Phi_{F_5}^A(\mathbf e_z),
 &\ell_{A,1}=\Phi_{F_2}^A(\mathbf e_y)-\Phi_{F_3}^A(\mathbf e_z),
 &\ell_{A,2}=\Phi_{F_3}^A(\mathbf e_z),\\
 \ell_{A,3}=\Phi_{F_3}^A(\mathbf e_y),
 &\ell_{A,4}=\Phi_{F_4}^A(\mathbf e_z)-\Phi_{F_3}^A(\mathbf e_y),
 &\ell_{A,5}=\Phi_{F_5}^A(\mathbf e_z),\\
 \ell_{B,0}=-\Phi_{F_0}^B(\mathbf e_y),
 &\ell_{B,1}=-\Phi_{F_2}^B(\mathbf e_x),
 &\ell_{B,2}=-\Phi_{F_2}^B(\mathbf e_z),\\
 \ell_{B,3}=-\Phi_{F_4}^B(\mathbf e_y),
 &\ell_{B,4}=-\Phi_{F_4}^B(\mathbf e_x),
 &\ell_{B,5}=-\Phi_{F_0}^B(\mathbf e_z).
\end{array}
\label{eq:regular-endpoint-fields}
\end{equation}
\endgroup
Substitution in
\eqref{eq:quartic-face-edge-traces} gives,
for \(0\leq i,j\leq5\),
\begin{equation}
\begin{array}{ll}
 (\diver\ell_{A,i})|_{K_j,e}
 =\delta_{ij}\lambda_A^2\lambda_B, &
 (\diver\ell_{B,i})|_{K_j,e}
 =\delta_{ij}\lambda_A\lambda_B^2,\\
 (\diver\ell_{A,i})|_{K,f}
 =0,&
 (\diver\ell_{B,i})|_{K,f}=0,
\end{array}
\label{eq:regular-basis-traces}
\end{equation}
for every \(K\in\mathcal T(\omega_e)\) and
\(f\in\mathcal E(K)\setminus\{e\}\) in the second line.

For \(g\in\mathcal G_e^4\), write
\(z_A=(a_i)_{i=0}^5\) and \(z_B=(b_i)_{i=0}^5\).  Then
\(
 g|_{K_i}
 =a_i\lambda_A^2\lambda_B+b_i\lambda_A\lambda_B^2.
\)
Define the preliminary lift
\begin{equation}
 L_e^4g:=\sum_{i=0}^5(a_i\ell_{A,i}+b_i\ell_{B,i}).
 \label{eq:regular-preliminary-lift}
\end{equation}
It follows from \eqref{eq:regular-basis-traces} that
the divergence trace conditions hold.

\begin{figure}[t]
\centering
\begin{minipage}[b]{0.48\textwidth}
\centering
\input{tikz/regular-body-diagonal-star.tex}
\par\smallskip
\centering\textbf{(a)} Body-diagonal star
\end{minipage}
\hfill
\begin{minipage}[b]{0.51\textwidth}
\centering
\input{tikz/regular-coordinate-star.tex}
\par\smallskip
\centering\textbf{(b)} Interior-coordinate star
\end{minipage}
\caption{Body-diagonal and interior-coordinate edge stars.}
\label{fig:regular-six-stars}
\end{figure}

For any interior-coordinate edge star, let \(T_{\rm reg}\) be an
orientation-preserving unimodular affine map from the body-diagonal star onto
it.  Transfer vector fields by the associated contravariant Piola transform
\begin{equation}
 \mathcal P_{T_{\rm reg}}\widehat v
 :=
 \frac{1}{\det DT_{\rm reg}}\,
 DT_{\rm reg}\,\widehat v\circ T_{\rm reg}^{-1}
 =
 DT_{\rm reg}\,\widehat v\circ T_{\rm reg}^{-1}.
 \label{eq:regular-piola-transform}
\end{equation}
For every \(K\in\mathcal T(\omega_e)\), this transformation preserves the
elementwise divergence integral:
\[
 \int_{T_{\rm reg}(K)}
 \diver\bigl(\mathcal P_{T_{\rm reg}}\widehat v\bigr)\,\dd x
 =
 \int_K\diver\widehat v\,\dd\xi .
\]
Since \(\det DT_{\rm reg}=1\), the prescribed divergence traces are also
transported without rescaling.  Moreover, because \(T_{\rm reg}\) is a single
affine map on the entire star, the transformation preserves conformity and
the piecewise polynomial degree.  Thus the twelve endpoint fields above, and
hence the preliminary lift \(L_e^4\), transfer to all three
interior-coordinate directions without a separate construction.

For example, for the \(z\)-directed interior-coordinate edge, one may take
\[
 T_{\rm reg}(\xi)
 =(1,1,0)+\xi_1(0,-1,0)+\xi_2(1,1,1)+\xi_3(-1,0,0),
 \quad \det DT_{\rm reg}=1.
\]
Then
\[
 A'=T_{\rm reg}(A)=110,\quad
 B'=T_{\rm reg}(B)=111,\quad
 e'=T_{\rm reg}(e).
\]
The tetrahedra \(T_{\rm reg}(K_i)\), \(0\leq i\leq5\), therefore form the
complete six-star of the edge \([110,111]\).

\subsubsection*{Interior square-diagonal stars.}

For an interior square diagonal, set
\[
 A=000,\quad B=011,\quad
 D_0=010,\quad D_1=-100,\quad D_2=001,\quad D_3=111,
\]
and number the tetrahedra cyclically by
\[
 K_i=[A,B,D_i,D_{i+1}],\quad F_i=[A,B,D_i],\quad 0\le i\le 3,
\]
with indices modulo four.  These tetrahedra form the complete four-star of
\(e=[A,B]\).  The two tetrahedra \(K_0,K_1\) lie in
\([-1,0]\times[0,1]^2\), while \(K_2,K_3\) lie in \([0,1]^3\); thus the
two cubes meet along the square containing \(e\).

The admissible endpoint coefficients lie in
\(S_e=\{z\in\R^4:z_0-z_1+z_2-z_3=0\}\).
For \(0\leq i\leq3\), define the endpoint fields
\begin{equation}
 \ell_{A,i}=\Phi_{F_i}^A(d_i^A),
 \qquad
 \ell_{B,i}=\Phi_{F_i}^B(d_i^B),
\label{eq:interior-square-endpoint-fields}
\end{equation}
where
\begingroup
\renewcommand{\arraystretch}{1.1}
\setlength{\arraycolsep}{7pt}
\begin{equation}
\begin{array}{c|cccc}
 i&0&1&2&3\\ \hline
 d_i^A&\mathbf e_y&-\mathbf e_x&\mathbf e_z&\mathbf e_x+\mathbf e_y+\mathbf e_z\\
 d_i^B&-\mathbf e_z&-(\mathbf e_x+\mathbf e_y+\mathbf e_z)&-\mathbf e_y&\mathbf e_x.
\end{array}
\label{eq:interior-square-directions}
\end{equation}
\endgroup
For \(j\in\{i-1,i\}\), direct calculation on \(K_j\) gives
\begin{equation}
\begin{aligned}
 d_i^A\cdot\nabla\lambda_{D_i}^{K_j}
 &=d_i^B\cdot\nabla\lambda_{D_i}^{K_j}=1,\\
 d_i^A\cdot\nabla\lambda_B^{K_j}
 &=d_i^B\cdot\nabla\lambda_A^{K_j}=0.
\end{aligned}
\label{eq:interior-square-direction-check}
\end{equation}
Substitution in \eqref{eq:quartic-face-edge-traces}
therefore yields, for \(0\leq i,j\leq3\),
\begin{equation}
\begin{aligned}
 (\diver\ell_{A,i})|_{K_j,e}
 &=(\delta_{j,i-1}+\delta_{j,i})\lambda_A^2\lambda_B,\\
 (\diver\ell_{B,i})|_{K_j,e}
 &=(\delta_{j,i-1}+\delta_{j,i})\lambda_A\lambda_B^2,
\end{aligned}
\label{eq:interior-square-basis-traces}
\end{equation}
with cyclic indices.  The divergence trace on every other edge of each tetrahedron is zero.

\begin{figure}[t]
\centering
\input{tikz/interior-square-diagonal-star.tex}
\caption{The interior square-diagonal four-star.}
\label{fig:interior-square-diagonal-four-star}
\end{figure}

For \(g\in\mathcal G_e^4\), write
\(
 z_A=(a_i)_{i=0}^3, z_B=(b_i)_{i=0}^3\) and 
 \(
 g|_{K_i}=a_i\lambda_A^2\lambda_B+b_i\lambda_A\lambda_B^2.
\)
Define, with cyclic indices,
\begin{equation}
 \alpha_i^A=\frac14(a_{i-1}+2a_i-a_{i+1}),
 \qquad
 \alpha_i^B=\frac14(b_{i-1}+2b_i-b_{i+1}).
 \label{eq:interior-square-coefficients}
\end{equation}
The alternating relations for \(z_A,z_B\in S_e\) imply
\(\alpha_i^A+\alpha_{i+1}^A=a_i\) and
\(\alpha_i^B+\alpha_{i+1}^B=b_i\).  Hence the explicit preliminary lift
\begin{equation}
 L_e^4g
 =\sum_{i=0}^3
 \bigl(\alpha_i^A\ell_{A,i}+\alpha_i^B\ell_{B,i}\bigr)
 \label{eq:interior-square-preliminary-lift}
\end{equation}
satisfies the prescribed divergence trace conditions~\eqref{eq:the-divergence-trace-conditions}.

Moreover, each \(d_i^A\) and \(d_i^B\) is tangent to \(F_i\), so
\eqref{eq:quartic-face-moment} gives
\begin{equation}
 M_{\omega_e}(L_e^4g)=\mathbf 0.
 \label{eq:interior-square-zero-moments}
\end{equation}
Thus the lift  has zero element moments and no
repair is needed.\bigskip

The quartic lifts for boundary-edge stars  are given in
Appendix~\ref{app:boundary-quartic-templates}.
Together with the interior constructions above, they establish
Lemma~\ref{lem:quartic-clean-lift}.

\subsection{Lifting the higher-order component}
\label{sec:high-mode-clean-lift}

Let \(k\geq5\) and write
\(g=g^{(4)}+\beta_e r_g\) as in
\eqref{eq:admissible-decomposition}.  The quartic component is lifted by
the constructions in the preceding subsection.  For
\(r\in\Pp_{k-5}(e;S_e)\), we now construct \(H_e^kr\) whose divergence
trace on each target-edge side \((K,e)\) is \(\beta_e r_K\), with zero
divergence trace on any other edge.

We first consider a pair of adjacent tetrahedra around the
target edge \(e=[A,B]\). Write \(K^\pm=[A,B,c,w^\pm]\), with
common face \(F=[A,B,c]\). Let
\(G^\pm=[A,B,w^\pm]\) be the faces opposite \(c\).

For \(p\in\Pp_{k-5}(e)\), write \(p=p(t)\) in the edge coordinate
\(t=\lambda_B\), and extend it to both tetrahedra by \(p(\lambda_B)\). For \(d\in\R^3\), define
\begin{equation}
 \Psi_F(d,p):=
 \begin{cases}
  d\lambda_A^2\lambda_B^2\lambda_c\,p(\lambda_B),
      &\text{on }K^+\cup K^-,\\
  0,&\text{elsewhere in the patch}.
 \end{cases}
 \label{eq:high-mode-face-field}
\end{equation}
This field is continuous across \(F\), and
vanishes on the outer boundary of the pair, so its zero extension is conforming.  On the target edge, only
the derivative of \(\lambda_c\) contributes to its divergence, while the
factors \(\lambda_A^2\lambda_B^2\) give zero divergence trace on every
other edge.
Thus
\begin{equation}
\begin{aligned}
 (\diver\Psi_F(d,p))|_{K,e}
 &=(d\cdot\nabla\lambda_c^K)\beta_ep,
 &&K\in\{K^+,K^-\},\\
 (\diver\Psi_F(d,p))|_{K,f}
 &=0,
 &&K\in\{K^+,K^-\},\ f\in\mathcal E(K)\setminus\{e\}.
\end{aligned}
 \label{eq:high-mode-face-trace}
\end{equation}

The two target-edge traces are  determined by the inner
products of \(d\) with \(\nabla\lambda_c^+\) and
\(\nabla\lambda_c^-\). We now combine these fields on the exact edge star.

\begin{lemma}
\label{lem:high-order-clean-lift}
For each of the seven edge-star geometries and every fixed \(k\geq5\),
there exists a linear operator
\[
 H_e^k:\Pp_{k-5}(e;S_e)
 \longrightarrow
 \boldsymbol V_k(\omega_e).
\]
For every \(r\in\Pp_{k-5}(e;S_e)\), the operator satisfies
\begin{equation}
\begin{aligned}
 (\diver H_e^k r)|_{K,e}&=\beta_er_K,
 &&K\in\mathcal T(\omega_e),\\
 (\diver H_e^k r)|_{K,f}&=0,
 &&K\in\mathcal T(\omega_e),\quad f\in\mathcal E(K)\setminus\{e\}.
\end{aligned}
 \label{eq:explicit-high-mode-lift}
\end{equation}
\end{lemma}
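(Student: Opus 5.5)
The plan is to build \(H_e^k\) entirely from the face fields \(\Psi_F(d,p)\) of \eqref{eq:high-mode-face-field}, one family for each interior face \(F\) of \(\omega_e\) that contains \(e\). Since \(\Pp_{k-5}(e;S_e)=S_e\otimes\Pp_{k-5}(e)\), it suffices to construct a linear map \(z\mapsto(d_F(z))_F\) from \(S_e\) into directions such that
\[
\sum_{F}\bigl(d_F(z)\cdot\nabla\lambda_{c_F}^K\bigr)\,[K\supset F]=z_K,\qquad K\in\mathcal T(\omega_e).
\]
For a basis \(\{s_j\}\) of \(S_e\), writing \(r=\sum_j s_j p_j\) with \(p_j\in\Pp_{k-5}(e)\), I then set
\[
H_e^kr:=\sum_j\sum_F\Psi_F\bigl(d_F(s_j),p_j\bigr).
\]
This is linear in \(r\) and lies in \(\bm V_k(\omega_e)\), because each \(\Psi_F\) has degree \(5+(k-5)=k\) and a conforming zero extension. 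By \eqref{eq:high-mode-face-trace}, every field has zero divergence trace on all edges other than \(e\), and on \((K,e)\) its trace is \(\beta_e\) times the stated combination. So \eqref{eq:explicit-high-mode-lift} reduces to a finite-dimensional surjectivity statement, which I check for each edge type.

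\textbf{Key observation.} For a face \(F=[A,B,c]\) shared by \(K^\pm=[A,B,c,w^\pm]\), the gradient \(\nabla\lambda_c^{\pm}\) is normal to \(G^\pm=[A,B,w^\pm]\). If \(G^+\) and \(G^-\) are not coplanar, I can choose \(d\) parallel to \(G^-\) but not to \(G^+\). This gives traces \((1,0)\) on \((K^+,K^-)\), and symmetrically \((0,1)\), so a single face already produces arbitrary values on its two tetrahedra.

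\textbf{Nonsingular and degenerate types.}
\begin{itemize}
\item For the nonsingular types (two-tetrahedron ridge, one-plane coordinate edge, interior coordinate edge, body diagonal), the coplanar pair of nonshared faces never contains \(e\). Hence \(G^+\) and \(G^-\) are never coplanar for any interior face through \(e\).
\item Every tetrahedron of \(\omega_e\) is adjacent to at least one interior face through \(e\): the single interior face for \(m_e=2\), two faces for \(m_e=3\), and six for the six-stars.
\item Using the delta-type fields above, each tetrahedron is hit independently, which realizes all of \(S_e=\R^{m_e}\).
\item For the one-tetrahedron ridge, \(S_e=\{0\}\) and I take \(H_e^k=0\).
\end{itemize}

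\textbf{Singular types (the main obstacle).} For the square diagonals, \(G^+\) and \(G^-\) lie in a common plane. Then \(\nabla\lambda_c^+\) and \(\nabla\lambda_c^-\) are parallel, so each face produces traces in a fixed ratio \(\rho_F\) on its two tetrahedra; from the geometry I expect \(\rho_F=1\).
\begin{itemize}
\item For the boundary square diagonal, the single interior face therefore produces exactly the line \(\{(z,z)\}=S_e\).
\item For the interior four-star, the four faces produce vectors \(\varepsilon_{i-1}+\varepsilon_i\) (cyclically). These span the three-dimensional space \(\{z_1-z_2+z_3-z_4=0\}\).
\item This is the same linear algebra as \eqref{eq:interior-square-coefficients}. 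I would reuse those coefficients \(\alpha_i\), now with the directions \(d_i^A\) or \(d_i^B\) from \eqref{eq:interior-square-directions} playing the role of \(d_F\). By \eqref{eq:interior-square-direction-check}, these give \(d\cdot\nabla\lambda_{D_i}^{K_j}=1\) for \(j\in\{i-1,i\}\).
\end{itemize}
The step needing real verification is this singular-case normalization and rank computation, together with the analogous check of \(\rho_F\) on the boundary square diagonal.
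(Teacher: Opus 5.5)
Your proposal is correct and is essentially the paper's proof. You build \(H_e^k\) from the face fields \(\Psi_F(d,p)\): on the nonsingular stars you use directions that separate the two sides (\(d\cdot\nabla\lambda_c^\pm=1\), \(d\cdot\nabla\lambda_c^\mp=0\)); on the interior square diagonal you use equal-coefficient directions with the cyclic coefficients \(\alpha_i=\frac14(r_{i-1}+2r_i-r_{i+1})\); the boundary square diagonal needs a single face; and the one-tetrahedron ridge gets \(H_e^k=0\). The normalization \(\rho_F=1\) that you left as expected is immediate, as the paper notes: when \(G^+\) and \(G^-\) are coplanar, \(\lambda_c^+\) and \(\lambda_c^-\) are the same affine function (both vanish on that plane and equal one at \(c\)), so their gradients coincide.
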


\begin{proof}
  In Section~\ref{sec:edge-classification}, we classified the Freudenthal
edges as nonsingular edges, singular edges, or one-tetrahedron ridges
when deriving the edge-trace constraints.
Following this classification, we construct the lifting operators
in turn.

For a nonsingular edge, \(G^+\) and \(G^-\) are not coplanar, so
\(\nabla\lambda_c^+\) and \(\nabla\lambda_c^-\) are linearly independent.
We can therefore fix the unique vectors \(d_F^+,d_F^-\) in their span
such that
\begin{equation}
\begin{aligned}
 d_F^+\cdot\nabla\lambda_c^+&=1,
 &d_F^+\cdot\nabla\lambda_c^-&=0,\\
 d_F^-\cdot\nabla\lambda_c^+&=0,
 &d_F^-\cdot\nabla\lambda_c^-&=1.
\end{aligned}
 \label{eq:high-mode-directions}
\end{equation}
For any \(p_+,p_-\in\Pp_{k-5}(e)\), the field
\[
 \Psi_F(d_F^+,p_+)+\Psi_F(d_F^-,p_-)
\]
then has divergence traces \(\beta_ep_+\) and \(\beta_ep_-\) on
the \(K^+\) and \(K^-\) sides, respectively.

We apply this construction to a body-diagonal star, using the cyclic numbering in
\eqref{eq:body-six-star}, and write \(r_i:=r_{K_i}\).  Select
\[
 F_1=K_0\cap K_1,\qquad F_3=K_2\cap K_3,\qquad F_5=K_4\cap K_5.
\]
For each selected face \(F_i\), take \(K^-=K_{i-1}\) and \(K^+=K_i\) in
\eqref{eq:high-mode-directions}, and set
\begin{equation}
 H_e^kr=
 \sum_{i\in\{1,3,5\}}
 \bigl(\Psi_{F_i}(d_{F_i}^-,r_{i-1})
       +\Psi_{F_i}(d_{F_i}^+,r_i)\bigr).
 \label{eq:body-high-mode-lift}
\end{equation}
Each tetrahedron occurs in exactly one pair, so its target-edge trace
is \(\beta_er_i\).

The interior-coordinate construction follows by the Piola transformation
\eqref{eq:regular-piola-transform}.  The two-tetrahedron ridge and
one-plane coordinate cases use the same choice of directions on one and two
shared faces, respectively, to prescribe the trace on each tetrahedron.

For a singular edge, \(G^+\) and \(G^-\) are coplanar.
The two barycentric coordinates therefore coincide: both vanish on
the same plane and equal one at \(c\).  The two coefficients in
\eqref{eq:high-mode-face-trace} must consequently be equal.  Taking
\[
 \eta_F=\nabla\lambda_c^+=\nabla\lambda_c^-,
 \qquad d_F=\frac{\eta_F}{\norm{\eta_F}^2},
\]
gives a field \(\Psi_F(d_F,p)\) with divergence trace \(\beta_ep\)
on both sides.

For an interior square diagonal, use the cyclic numbering
\((K_i)_{i=0}^3\), set \(F_i=K_{i-1}\cap K_i\), and write
\(r_i:=r_{K_i}\), with indices modulo four.
A field on \(F_i\) contributes equally to \(K_{i-1}\) and \(K_i\).
The contributions from \(F_i\) and \(F_{i+1}\) must therefore sum to
\(\beta_er_i\) on the \(K_i\) side.  Define
\begin{equation}
 \alpha_i=\frac14(r_{i-1}+2r_i-r_{i+1}),
 \qquad
 H_e^kr=\sum_{i=0}^3\Psi_{F_i}(d_{F_i},\alpha_i).
 \label{eq:interior-square-high-mode-coefficients}
\end{equation}
The compatibility relation \(r_0-r_1+r_2-r_3=0\) implies
\(\alpha_i+\alpha_{i+1}=r_i\).  The two contributions to the
\(K_i\) side therefore sum to \(\beta_er_i\), as required.
For a boundary square diagonal, a single field \(\Psi_F(d_F,p)\)
on the shared face suffices, since compatibility gives
\(r_{K^+}=r_{K^-}=p\).

For the one-tetrahedron ridge, \(S_e=\{\mathbf0\}\). We simply
set \(H_e^k0=0\).

In each construction, the directions depend only on the geometry,
and the velocity field depends linearly on \(r\). Thus \(H_e^k\) is linear.
\end{proof}

Combining the quartic lift with the higher-order lift, we define the preliminary lift \(L_e:\mathcal G_e^k
 \rightarrow \boldsymbol V_{k}(\omega_e^{\mathrm{pre}})\) for all \(k\geq4\):
\begin{equation}
 L_eg=
 \begin{cases}
  L_e^4g,&k=4,\\[0.2em]
  L_e^4g^{(4)}+H_e^kr_g,&k\geq5.
 \end{cases}
 \label{eq:all-degree-preliminary-lift}
\end{equation}

\begin{theorem}[Preliminary lift]
\label{prop:clean-preliminary-lift}
For each of the seven edge-star geometries and every fixed \(k\geq4\),
the operator \(L_e\) satisfies, for every \(g\in\mathcal G_e^k\),
\[
\begin{aligned}
 (\diver L_eg)|_{K,e}
 &=g_K,
 &&K\in\mathcal T(\omega_e),\\
 (\diver L_eg)|_{K,f}
 &=0,
 &&K\in\mathcal T(\omega_e^{\mathrm{pre}}),
   f\in\mathcal E(K)\setminus\{e\}.
\end{aligned}
\]
Moreover, writing \(\mathbf1\) for the vector of ones,
\begin{equation}
 \seminorm{L_eg}_{H^1(\omega_e^{\mathrm{pre}})}
 \leq C\norm{g}_{0,e},
 \qquad
 \mathbf1\cdot M L_eg=0.
 \label{eq:preliminary-lift-bound}
\end{equation}
\end{theorem}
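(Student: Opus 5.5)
The plan is to verify the three assertions in turn: the trace identities, the linear-algebra bound, and the vanishing of the total moment. Each should follow from results already established plus finite-dimensionality and the divergence theorem.

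\emph{Trace identities.} For \(k=4\) they are exactly \eqref{eq:the-divergence-trace-conditions} of Lemma~\ref{lem:quartic-clean-lift}, since \(\mathcal G_e^4\) is the domain of \(L_e^4\). For \(k\geq5\), take \(g\in\mathcal G_e^k\) and use the splitting \eqref{eq:scalar-edge-mode-decomposition}, \(g=g^{(4)}+\beta_e r_g\) with \(g^{(4)}\in\mathcal G_e^4\) and \(r_g\in\Pp_{k-5}(e;S_e)\).
\begin{itemize}
\item Lemma~\ref{lem:quartic-clean-lift} gives \((\diver L_e^4g^{(4)})|_{K,e}=g^{(4)}_K\) and zero traces on all other edges of \(\omega_e^{\mathrm{pre}}\).
\item Lemma~\ref{lem:high-order-clean-lift} gives \((\diver H_e^kr_g)|_{K,e}=\beta_e (r_g)_K\) and zero traces on the other edges of \(\omega_e\).
\item The extra tetrahedron in \(\omega_e^{\mathrm{pre}}\setminus\omega_e\) occurs only for the two-tetrahedron ridge. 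We regard \(H_e^kr_g\) as extended by zero there. This extension is conforming because each \(\Psi_F\) vanishes on the outer faces of its pair, so its traces on that tetrahedron vanish.
\end{itemize}
Adding the two contributions gives the stated identities, with \(L_e^4\) viewed as a map into \(\boldsymbol V_k(\omega_e^{\mathrm{pre}})\) since \(\Pp_4\subset\Pp_k\).

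\emph{Stability bound.} First, \(L_e\) is linear. For \(k\geq5\) this holds because \(g\mapsto (z_A,z_B)=(g'(0),-g'(1))\) is linear, so \(g\mapsto g^{(4)}\) is linear, and hence so is \(g\mapsto r_g=(g-g^{(4)})/\beta_e\). Second, the patch \(\omega_e^{\mathrm{pre}}\) is a fixed reference template, so \(\mathcal G_e^k\) and \(\boldsymbol V_k(\omega_e^{\mathrm{pre}})\) are finite-dimensional. Third, \(\norm{\cdot}_{0,e}\) in \eqref{eq:natural-edge-norm} is a genuine norm on \(\mathcal G_e^k\), and \(\seminorm{\cdot}_{H^1(\omega_e^{\mathrm{pre}})}\) is a seminorm on \(\boldsymbol V_k(\omega_e^{\mathrm{pre}})\). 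Any linear map from a finite-dimensional normed space into a seminormed space is bounded, which yields \(\seminorm{L_eg}_{H^1}\le C\norm{g}_{0,e}\). The constant depends only on \(k\) and on which of the seven templates is used; reflected and Piola-transported copies are covered by the same constant. The scaling to size \(h\) is deferred to the global argument, as announced after \eqref{eq:natural-edge-norm}.

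\emph{Zero total moment.} Every field used vanishes on \(\partial\omega_e^{\mathrm{pre}}\) (this includes \(\Phi_F^a\), \(\Psi_F\), and the boundary templates of Appendix~\ref{app:boundary-quartic-templates}). By construction they are zero extensions of fields vanishing on the outer faces of their supporting pairs, so \(L_eg\in \boldsymbol H^1_0(\omega_e^{\mathrm{pre}})^3\). The divergence theorem then gives
\[
\mathbf1\cdot ML_eg=\sum_{K\in\mathcal T(\omega_e^{\mathrm{pre}})}\int_K\diver L_eg\,\dd x=\int_{\partial\omega_e^{\mathrm{pre}}}L_eg\cdot n\,\dd s=0.
\]
For the interior square diagonal this is already stronger by \eqref{eq:interior-square-zero-moments}.

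The only delicate point is this last step, which requires confirming that the boundary-star templates are also supported with zero trace on the whole boundary of \(\omega_e^{\mathrm{pre}}\), not merely on \(\partial\Omega\). I will check this from the face-field structure of the appendix constructions. If some template used a field nonzero on part of \(\partial\omega_e^{\mathrm{pre}}\), I would instead compute the moment sum directly from \eqref{eq:quartic-face-moment}: summing \(\tfrac1{360}d\cdot(\nabla\lambda_a^K+\nabla\lambda_b^K+\nabla\lambda_c^K)\) over \(K^\pm\) cancels because the gradients of the barycentric functions of the shared face vertices telescope.
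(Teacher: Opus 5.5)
Your proposal is correct and follows the paper's proof step for step. The trace identities come from the two lemmas combined with the splitting $g=g^{(4)}+\beta_e r_g$, with $H_e^k r_g$ extended by zero from $\omega_e$ to $\omega_e^{\mathrm{pre}}$. The bound comes from finite-dimensionality over finitely many reference templates, and $\mathbf 1\cdot ML_eg=0$ follows from $L_eg\in H_0^1(\omega_e^{\mathrm{pre}})^3$ and the divergence theorem. The point you flag as delicate is already settled by the appendix's statement that all displayed faces are interior faces of the corresponding patch, so your fallback computation is not needed.
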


\begin{proof}
The trace properties follow from Lemma~\ref{lem:quartic-clean-lift},
Lemma~\ref{lem:high-order-clean-lift} and the decomposition of \(g\).
The preliminary support is unchanged, since \(H_e^kr_g\) is supported
on \(\omega_e\subset\omega_e^{\mathrm{pre}}\).

For fixed \(k\), the decomposition of \(g\) and the constructions above
define linear maps between fixed finite-dimensional spaces.  Hence
\(L_e\) is bounded in the stated norms on each reference patch.
Since there are only finitely many reference patches, the same constant
\(C\) applies to all of them.  

Finally,
\(L_eg\in H_0^1(\omega_e^{\mathrm{pre}})^3\), so the divergence theorem gives
\[
 \mathbf1\cdot ML_eg
 =\int_{\omega_e^{\mathrm{pre}}}\diver L_eg\,\dd x
 =0.
\]\end{proof}

\subsection{The domino repair for \texorpdfstring{\(k\geq5\)}{k >= 5}}
\label{sec:high-order-repair}

The lift from Theorem~\ref{prop:clean-preliminary-lift}
must be corrected to cancel elementwise divergence moments
without changing divergence traces.  Zhang \cite[Lemma~3.3]{Zhang2011} performs this correction using
degree-six fields supported on pairs of adjacent tetrahedra.
Here we reduce the degree of the correction fields to five and
correct the moments on every tetrahedron of the preliminary patch
at the same time.
The correction proceeds by transferring divergence moments from
one tetrahedron to the next, leaving zero moments behind,
much as a row of dominoes falls in succession. Hence, we  call this procedure a \emph{domino repair}. A similar technique was used in
\cite[Lemma~2.6]{Vogelius1983} to solve two-dimensional problems.

Every pair of face-adjacent Freudenthal tetrahedra has two coplanar
nonshared faces.  Write the pair as \(K^\pm=[a,b,c,q^\pm]\), with
shared face \(F=[a,b,c]\), and label the vertices so that
\(G^\pm=[b,c,q^\pm]\) are coplanar.  Therefore, the barycentric
gradients $\nabla\lambda_a^+=\nabla\lambda_a^-$, and we set
\[
 \eta_F:=\nabla\lambda_a^+=\nabla\lambda_a^-.
\]
Let \(n_F\) be the unit normal to \(F\) pointing out of \(K^+\) and
into \(K^-\).  Define
\[
 d_F:=n_F-\frac{n_F\cdot\eta_F}{\norm{\eta_F}^2}\eta_F,
 \qquad
 \psi:=d_F\lambda_a\lambda_b^2\lambda_c^2
 \quad\text{on }K^+\cup K^-,
\]
and extend \(\psi\) by zero outside the pair.

\begin{lemma}[Two-tetrahedron domino]
\label{lem:two-tetrahedron-moment-corrector}
The field \(\psi\) belongs to \(\bm V_5(K^+\cup K^-)\), vanishes
on the outer boundary of the pair, and has zero divergence trace on
every edge of both tetrahedra.  Its elementwise divergence moments satisfy
\begin{equation}
 \int_{K^+}\diver \psi\,\dd x
 =-
 \int_{K^-}\diver \psi\,\dd x
 \neq0.
 \label{eq:opposite-element-moments}
\end{equation}
\end{lemma}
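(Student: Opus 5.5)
The plan is to verify the four claims (conformity and degree, vanishing on the outer boundary, zero edge traces, opposite nonzero moments) directly from the product structure of \(\psi=d_F\lambda_a\lambda_b^2\lambda_c^2\). The one non-trivial ingredient is the orthogonality \(d_F\cdot\eta_F=0\), which is built into the definition of \(d_F\).

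\emph{Conformity and degree.} On each of \(K^\pm\) the field \(\psi\) is a vector polynomial of degree \(1+2+2=5\). On \(F=[a,b,c]\), the barycentric coordinates \(\lambda_a,\lambda_b,\lambda_c\) of \(K^+\) and of \(K^-\) both restrict to the barycentric coordinates of the triangle \(F\). Hence the two pieces agree on \(F\). Every other face of \(K^\pm\) contains \(q^\pm\) and omits one of \(a,b,c\), so one of the three factors vanishes there. Thus \(\psi\) vanishes on the outer boundary of the pair, and its zero extension lies in \(\bm V_5(K^+\cup K^-)\), as in the construction of \eqref{eq:quartic-face-field}.

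\emph{Edge traces.} On either tetrahedron,
\[
 \diver\psi=(d_F\cdot\nabla\lambda_a)\lambda_b^2\lambda_c^2
 +2(d_F\cdot\nabla\lambda_b)\lambda_a\lambda_b\lambda_c^2
 +2(d_F\cdot\nabla\lambda_c)\lambda_a\lambda_b^2\lambda_c .
\]
Since \(\nabla\lambda_a^\pm=\eta_F\) and \(d_F\perp\eta_F\) by construction, the first term vanishes on both sides. This uses the coplanarity of \(G^\pm\) in an essential way. The remaining terms carry the factor \(\lambda_a\lambda_b\lambda_c\). Every edge of \(K^\pm\) has only two endpoints, so it misses at least one of \(a,b,c\), and the corresponding factor vanishes on it. Hence every edge trace is zero. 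This step is the only delicate one: without killing the first term, the edge \([b,c]\) would carry the nonzero trace \(\lambda_b^2\lambda_c^2\).

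\emph{Moments.} By the divergence theorem and the vanishing of \(\psi\) on all faces except \(F\),
\[
 \int_{K^+}\diver\psi\,\dd x=\int_F\psi\cdot n_F\,\dd S,
 \qquad
 \int_{K^-}\diver\psi\,\dd x=-\int_F\psi\cdot n_F\,\dd S,
\]
because \(n_F\) points out of \(K^+\) and into \(K^-\). This gives the opposite signs in \eqref{eq:opposite-element-moments}. For nonvanishing, write \(\int_F\psi\cdot n_F\,\dd S=(d_F\cdot n_F)\int_F\lambda_a\lambda_b^2\lambda_c^2\,\dd S\). The integral is positive. Moreover
\[
 d_F\cdot n_F=1-\frac{(n_F\cdot\eta_F)^2}{\norm{\eta_F}^2}>0,
\]
unless \(\eta_F\) is parallel to \(n_F\). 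That would make the plane of \(G^\pm\), which is a level set of \(\lambda_a\), parallel to \(F\). This is impossible, because \(G^\pm\) and \(F\) share the edge \([b,c]\) but are distinct planes, since \(q^\pm\notin F\). Hence both moments are nonzero.
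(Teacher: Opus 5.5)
Your proposal is correct and follows essentially the same route as the paper's proof. Conformity comes from the barycentric traces agreeing on \(F\). The explicit divergence formula, with \(d_F\cdot\eta_F=0\), kills the only term that could survive on \([b,c]\). Finally, the divergence theorem reduces both moments to the flux through \(F\), where \(d_F\cdot n_F=\norm{d_F}^2>0\) because \(F\) and the plane of \(G^\pm\) are distinct planes through \(bc\). Your observation that the first term of \(\diver\psi\) vanishes identically, not just on \(bc\), is a slightly tidier organization of the same edge-trace check.
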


\begin{proof}
The barycentric traces agree on \(F\), while on every other face one of
\(\lambda_a,\lambda_b,\lambda_c\) vanishes.  Thus \(\psi\) is continuous,
vanishes on the outer boundary of the pair, and admits a conforming
zero extension.  By definition, \(d_F\cdot\eta_F=0\).  Since the shared
face and the coplanar outer faces lie in distinct planes meeting along
\(bc\), their normals \(n_F\) and \(\eta_F\) are not parallel, and hence
\(d_F\cdot n_F=\norm{d_F}^2>0\).

Direct differentiation gives
\[
\begin{aligned}
 \diver \psi={}&
 (d_F\cdot\nabla\lambda_a)\lambda_b^2\lambda_c^2\\
 &+2(d_F\cdot\nabla\lambda_b)\lambda_a\lambda_b\lambda_c^2
 +2(d_F\cdot\nabla\lambda_c)\lambda_a\lambda_b^2\lambda_c.
\end{aligned}
\]
On \(bc\), only the first term can survive, and it vanishes because
\(d_F\cdot\eta_F=0\).  On every other edge, the remaining factors give
zero divergence trace.  The divergence theorem then yields
\[
 \int_{K^+}\diver \psi\,\dd x
 =
 (d_F\cdot n_F)
 \int_F\lambda_a\lambda_b^2\lambda_c^2\,\dd s>0.
\]
The common trace on \(F\) and the opposite outward normals give the
negative of this value on \(K^-\).
\end{proof}

We next combine these domino fields on
\(\omega_e^{\mathrm{pre}}\).  Let
\(J=\#\mathcal T(\omega_e^{\mathrm{pre}})\) be its number of tetrahedra.

\begin{lemma}[Domino repair]
\label{lem:patch-moment-correction}
For each of the seven edge-star geometries and every fixed \(k\geq5\),
there exists a linear operator
\[
 R_e:\mathcal G_e^k
 \longrightarrow \boldsymbol V_5(\omega_e^{\mathrm{pre}}).
\]
For every \(g\in\mathcal G_e^k\), the operator satisfies
\begin{equation}
\begin{aligned}
 (\diver R_e g)|_{K,f}&=0,
 &&K\in\mathcal T(\omega_e^{\mathrm{pre}}),
    f\in\mathcal E(K),\\
 \int_K\diver(L_eg-R_e g)\,\dd x&=0,
 &&K\in\mathcal T(\omega_e^{\mathrm{pre}}).
\end{aligned}
 \label{eq:patch-moment-correction}
\end{equation}
\end{lemma}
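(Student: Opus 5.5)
The plan is to build \(R_e\) as a linear combination of the two-tetrahedron domino fields of Lemma~\ref{lem:two-tetrahedron-moment-corrector}, placed along a spanning tree of the face-adjacency graph of \(\mathcal T(\omega_e^{\mathrm{pre}})\). Write \(\mu:=ML_eg\in\R^J\) for the moment vector of the preliminary lift. By Theorem~\ref{prop:clean-preliminary-lift}, \(\mathbf1\cdot\mu=0\).

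\textbf{The ingredients.} For each interior face \(F\) of the patch shared by \(K^+_F,K^-_F\), let \(\psi_F\) be the field of Lemma~\ref{lem:two-tetrahedron-moment-corrector}, normalized by its positive moment \(\kappa_F\). The normalized field has moment \(+1\) on \(K_F^+\) and \(-1\) on \(K_F^-\), zero moment on every other tetrahedron (being supported on the pair), and zero divergence trace on every edge. Since \(\bm V_5\subset\bm V_k\) for \(k\ge5\), these fields are admissible velocities. Every linear combination of them is conforming on \(\omega_e^{\mathrm{pre}}\), vanishes on its boundary, and satisfies the first line of \eqref{eq:patch-moment-correction}.

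\textbf{The construction.} The tetrahedra of \(\omega_e^{\mathrm{pre}}\) form a connected graph \(\mathcal D\) under face adjacency. For the edge stars this is a cycle or a path around \(e\); for the two-tetrahedron ridge the extra tetrahedron is attached through a face, and for the one-tetrahedron ridge the graph is a single vertex. Choose a spanning tree, in practice the path \(K_1,K_2,\dots,K_J\) with consecutive tetrahedra sharing \(F_j=K_j\cap K_{j+1}\). Define the repair by the domino recursion
\[
 c_j:=\sum_{i\le j}\mu_i,
 \qquad
 R_eg:=\sum_{j=1}^{J-1}c_j\,\widetilde\psi_{F_j},
\]
with orientation \(K^+=K_j\), \(K^-=K_{j+1}\). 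The moment of \(R_eg\) on \(K_j\) is then \(c_j-c_{j-1}=\mu_j\) for \(j<J\). On \(K_J\) it is \(-c_{J-1}=\mu_J\), where the last equality uses \(\mathbf1\cdot\mu=0\). This is exactly the domino transfer: each tetrahedron passes its accumulated moment to the next and is left with zero. The map \(g\mapsto R_eg\) is linear because \(\mu\) depends linearly on \(g\) through \(L_e\).

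\textbf{Where the care lies.} The main point to verify is that the domino fields exist on every face used, i.e.\ that each shared face in the chosen path has the coplanar nonshared-face property. This holds for all face-adjacent Freudenthal pairs (Zhang's observation recalled in Section~\ref{sec:edge-classification}), including the added tetrahedron of the two-tetrahedron ridge. One must also check that the chosen path uses only interior faces of \(\omega_e^{\mathrm{pre}}\), so that the zero extension stays inside the patch. Beyond that, the only inputs are the nonvanishing \(\kappa_F\neq0\) from \eqref{eq:opposite-element-moments} and the zero total moment from \eqref{eq:preliminary-lift-bound}.
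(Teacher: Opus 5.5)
Your proposal is correct and matches the paper's proof: a chain of face-adjacent tetrahedra through \(\omega_e^{\mathrm{pre}}\) (with one adjacency of the cyclic star omitted for interior edges), normalized two-tetrahedron dominoes on the consecutive shared faces, and prefix-sum coefficients. As in the paper, the zero total moment \(\mathbf1\cdot ML_eg=0\) is what makes the moment on the last tetrahedron come out right.
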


\begin{proof}
Fix \(g\in\mathcal G_e^k\) and set \(m(g)=ML_eg\).
Theorem~\ref{prop:clean-preliminary-lift} gives \(\mathbf1\cdot m(g)=0\).
  For \(J\geq2\), choose a chain of
face-adjacent tetrahedra that lists every tetrahedron in
\(\mathcal T(\omega_e^{\mathrm{pre}})\) exactly once.  For an interior
edge, omit one adjacency from the cyclic edge star; the boundary
preliminary patches are already chains.  Number the tetrahedra along
the chosen chain as
\begin{equation}
 K_0-K_1-\cdots-K_{J-1}.
 \label{eq:repair-chain}
\end{equation}
For \(1\leq i\leq J-1\), let \(F_i=K_{i-1}\cap K_i\).
By Lemma~\ref{lem:two-tetrahedron-moment-corrector}, we can normalize
and orient the field \(\psi\) on the pair sharing \(F_i\) to obtain \(\widetilde{\psi}_{F_i}\) with
\[
 E\widetilde{\psi}_{F_i}=\mathbf0,\qquad
 M\widetilde{\psi}_{F_i}=\mathbf e_{i-1}-\mathbf e_i,
\]
where \(\mathbf e_i\) denotes the coordinate vector associated with
\(K_i\).  Each field is extended by zero to the rest of the patch.

In this ordering, write \(m(g)=(m_0,\ldots,m_{J-1})\) and set
\[
 \mu_i(g):=\sum_{j=0}^{i-1}m_j,
 \qquad 1\leq i\leq J-1,
\]
and define the repair operator
\begin{equation}
 R_e g:=\sum_{i=1}^{J-1}\mu_i(g)\widetilde{\psi}_{F_i}.
 \label{eq:chain-moment-repair}
\end{equation}
The first tetrahedron receives the moment \(\mu_1(g)=m_0\).
Each interior tetrahedron \(K_i\) receives
\(-\mu_i(g)+\mu_{i+1}(g)=m_i\), and the last receives
\(-\mu_{J-1}(g)=m_{J-1}\) by the zero-sum condition.  Thus
\(MR_e g=ML_eg\), which gives the elementwise integral condition.
Moreover, \(ER_e g=\mathbf0\), since every domino has zero divergence
trace on all edges.  The fields are quintic and vanish on the patch
boundary.  

Finally, \(m(g)=ML_eg\) depends linearly on \(g\), and the
coefficients \(\mu_i(g)\) depend linearly on \(m(g)\), so \(R_e\) is linear.
\end{proof}

For \(k\geq5\), define the corrected lift by
\begin{equation}
 U_e^kg=L_eg-R_e g.
 \label{eq:high-order-local-operator}
\end{equation}
Lemma~\ref{lem:patch-moment-correction} gives
\[
 EU_e^kg=EL_eg,\qquad MU_e^kg=\mathbf0.
\]
The correction is quintic and supported on
\(\omega_e^{\mathrm{pre}}\), so it belongs to the degree-\(k\) velocity
space and does not enlarge the preliminary patch.  The case \(k=4\)
requires a different correction and is treated next.

\subsection{Moment repair for \texorpdfstring{\(k=4\)}{k = 4}}
\label{sec:quartic-repair}

\subsubsection{Body-diagonal and interior-coordinate stars}

At \(k=4\), the quintic domino is unavailable, so the repair depends on the stars require one additional moment direction
beyond those available on the exact edge star. Return to the body-diagonal ordering \eqref{eq:body-six-star}, and set
\[
 \mathbf1=(1,1,1,1,1,1),
 \qquad
 \R_0^6=\{r\in\R^6:\mathbf1\cdot r=0\}.
\]

Since the preliminary lift is conforming and vanishes on the outer boundary
of the six-star, the divergence theorem gives
\[
 m(g):=M L_e^4g\in\R_0^6.
\]
The repair problem is therefore to construct a quartic field
whose six element moments equal \(m(g)\).

\subsubsection*{Repair inside the exact star.}
Define the following moment-correction fields on \(\omega_e\):
\begin{equation}
\begin{aligned}
 \varphi_0&=-\Phi_{F_4}^B(\mathbf e_y)-\Phi_{F_2}^B(\mathbf e_z)
       +\Phi_{F_3}^B(\mathbf e_x),\\
 \varphi_1&=-\Phi_{F_0}^B(\mathbf e_z)-\Phi_{F_4}^B(\mathbf e_x)
       +\Phi_{F_5}^B(\mathbf e_y),\\
 \varphi_2&= \Phi_{F_1}^A(\mathbf e_y)+\Phi_{F_5}^A(\mathbf e_z)
       -\Phi_{F_0}^A(\mathbf e_x),\\
 \varphi_3&= \Phi_{F_3}^A(\mathbf e_z)+\Phi_{F_1}^A(\mathbf e_x)
       -\Phi_{F_2}^A(\mathbf e_y).
\end{aligned}
\label{eq:quartic-moment-correction-fields}
\end{equation}
For each summand, \eqref{eq:quartic-face-edge-traces}
shows that the divergence traces can be nonzero only on
the target edge and one spoke edge.  The chosen
directions annihilate the spoke-edge coefficients, while the target-edge
coefficients cancel within each \(\varphi_i\).  Direct substitution gives
\begin{equation}
 E\varphi_i=\mathbf 0,
 \qquad i=0,1,2,3.
 \label{eq:quartic-correction-fields-edge-invisible}
\end{equation}
By \eqref{eq:quartic-face-moment}, their element moments are
\begin{equation}
 \begin{pmatrix}M\varphi_0\\M\varphi_1\\M\varphi_2\\M\varphi_3\end{pmatrix}
 =\frac{1}{360}
 \begin{pmatrix}
 0&-1&1&1&-1&0\\
 -1&0&0&-1&1&1\\
 1&-1&0&0&-1&1\\
 -1&1&1&-1&0&0
 \end{pmatrix}.
 \label{eq:quartic-correction-moments}
\end{equation}
Let \(\mathsf A\) denote the  matrix on the right-hand side; its rows are
\(M\varphi_0,\ldots,M\varphi_3\).  The matrix has full row rank and satisfies
\[
 \mathsf A\mathbf1^T=\mathbf 0,
 \qquad
 \mathsf A\chi^T=\mathbf 0,
 \qquad
  \chi:=(1,-1,1,-1,1,-1).
\]
The entries of \(\chi\) alternate in sign around the target edge.
Since \(\mathbf1\) and \(\chi\) are linearly independent, we define the subspace
\begin{equation}
 \mathcal M_\chi
 :=
 \{r\in\R^6:\mathbf1\cdot r=0,\ \chi\cdot r=0\}
 =
 M\bigl(\linspan\{\varphi_0,\varphi_1,\varphi_2,\varphi_3\}\bigr).
 \label{eq:ordinary-repair-space}
\end{equation}

Set
\[
 \alpha(g):=\frac{\chi\cdot m(g)}6,
 \qquad
 m_\chi(g):=m(g)-\alpha(g)\chi\in\mathcal M_\chi.
\]
There is a unique linear combination
\(\varphi(g)\) of \(\{\varphi_0,\varphi_1,\varphi_2,\varphi_3\}\) satisfying
\(
 M\varphi(g)=m_\chi(g)\) and
\begin{equation}
 M(L_e^4g-\varphi(g))=m(g)-m_\chi(g)=\alpha(g)\chi.
 \label{eq:alternating-remainder}
\end{equation}
The only component left by this four-field repair is $\alpha(g)\chi$.

\subsubsection*{Closing the alternating component.}
To obtain the missing direction, adjoin
\begin{equation}
 K_6=[001,101,111,112],
 \qquad
 K_7=[001,111,112,011].
 \label{eq:regular-two-tet-cap}
\end{equation}
and define the enlarged quartic patch
\[
 \omega_e^4
 = \omega_e\cup K_6\cup K_7.
\]
Within \(\omega_e^4\), the tetrahedra \(K_3,K_4,K_6,K_7\) form the complete
four-star of the square diagonal \(e^\ast=[001,111]\); see Figure
\ref{fig:regular-six-star-cap}.  The two added tetrahedra do not alter the
target edge star.  They close a second, auxiliary edge star on which the
quartic correction is conforming.

\begin{figure}[t]
\centering
\input{tikz/regular-six-star-cap.tex}
\caption{The complete enlarged patch.}
\label{fig:regular-six-star-cap}
\end{figure}

Writing \(e^\ast=[a,b]\), define the conforming quartic edge field
\begin{equation}
 \Theta_{e^\ast}(d)
 :=
 \begin{cases}
  d\lambda_a^2\lambda_b^2,&\text{on }\omega_{e^\ast},\\
  0,&\text{elsewhere in }\omega_e^4.
 \end{cases}
 \label{eq:quartic-edge-field}
\end{equation}
The divergence of this field vanishes on every edge except \(e^\ast\):
\begin{equation}
 (\diver\Theta_{e^\ast}(d))|_{K,e^\ast}
 =2(d\cdot\nabla\lambda_a^K)\lambda_a\lambda_b^2
  +2(d\cdot\nabla\lambda_b^K)\lambda_a^2\lambda_b.
 \label{eq:quartic-edge-field-trace}
\end{equation}
On a unit reference tetrahedron,
\begin{equation}
 \int_K\diver\Theta_{e^\ast}(d)\,\dd x
 =\frac1{180}d\cdot
 (\nabla\lambda_a^K+\nabla\lambda_b^K).
 \label{eq:quartic-edge-field-moment}
\end{equation}

The edge field is only an intermediate building block: it supplies a moment
component outside \(\mathcal M_\chi\) but introduces a divergence trace on the auxiliary
edge \(e^\ast\).  The following face fields cancel this trace.  Set
\(C=001\), \(G_V=[C,011,B]\), \(G_U=[C,101,B]\), and define
\begin{equation}
\begin{aligned}
 W={}&\Theta_{e^\ast}(\mathbf e_x)
 +2\Phi_{F_2}^B(\mathbf e_z)+2\Phi_{F_3}^B(\mathbf e_z)
 -2\Phi_{F_0}^B(\mathbf e_z)-2\Phi_{F_5}^B(\mathbf e_z)\\
 &-2\Phi_{G_V}^C(\mathbf e_y)+2\Phi_{G_V}^B(\mathbf e_z)
 -2\Phi_{G_U}^B(\mathbf e_y+\mathbf e_z).
\end{aligned}
\label{eq:quartic-closing-field}
\end{equation}
Substitution in the trace and moment formulas gives
\begin{equation}
 E_{\omega_e^4}W=\mathbf 0,
 \qquad
 360M_{\omega_e^4}W=(-2,2,0,2,-2,0,0,0),
 \label{eq:quartic-closing-output}
\end{equation}
The six-star part of this moment vector contains both a
\(\mathcal M_\chi\)-component and an alternating component.  Since the four
moment-correction fields span \(\mathcal M_\chi\), the former can
be removed explicitly.  Indeed,
\begin{equation}
 W_\chi
 :=
 -270W+180(\varphi_0-\varphi_1-\varphi_2+\varphi_3)
 \label{eq:pure-alternating-closer}
\end{equation}
satisfies
\begin{equation}
 E_{\omega_e^4}W_\chi=\mathbf 0,
 \qquad
 M_{\omega_e^4}W_\chi=(\chi,0,0).
 \label{eq:pure-alternating-output}
\end{equation}
Thus \(W_\chi\) supplies the missing direction.  The final
operator on the body-diagonal star is
\begin{equation}
 U_e^4g:=L_e^4g-\varphi(g)-\alpha(g)W_\chi.
 \label{eq:regular-quartic-operator}
\end{equation}
Here \(L_e^4g\) and \(\varphi(g)\), originally supported on the exact
six-star, belong to \(H_0^1(\omega_e)^3\); their zero extensions to the
enlarged patch \(\omega_e^4\) are therefore conforming.
 \(U_e^4g\) preserves all prescribed edge traces and has zero
divergence moment on all eight tetrahedra.  The Piola transform
\eqref{eq:regular-piola-transform} transports the same repair to an
interior coordinate edge.

\begin{remark}
On the exact edge star \(\omega_e\), not every zero-sum moment vector can be realized by a quartic field whose divergence vanishes on every edge. To see this, let \(\mathsf C\)
collect all non-target edge-trace coefficients together with the two endpoint
coefficients on each target-edge side, let \(\mathsf T\) record the remaining
two target-edge coefficients on each tetrahedron, and let \(\mathsf M\) record
the six elementwise divergence moments.  The exact rational-arithmetic
calculation  gives
\begin{equation}
 \dim \mathsf T(\ker\mathsf C)=12=\dim\mathcal G_e^4,
 \qquad
 \dim \mathsf T(\ker\mathsf C\cap\ker\mathsf M)=11.
 \label{eq:quartic-exact-star-target-ranks}
\end{equation}
The same result is obtained in \cite{Henry2026}.
Thus all admissible target traces can be lifted on \(\omega_e\), but imposing
zero element moments loses one trace direction.  Equivalently, for the
complete edge-trace matrix
\(
 \mathsf E=\begin{bmatrix}\mathsf C\\ \mathsf T\end{bmatrix},
\)
the same calculation gives
\begin{equation}
 \dim \mathsf M(\ker\mathsf E)=4
 <5=\dim\R_0^6.
 \label{eq:quartic-exact-star-repair-rank}
\end{equation}
Hence these quartic fields produce only four directions.  The quartic velocity degrees of freedom on \(\omega_e\) are
insufficient for a complete repair.  
\end{remark}

\subsubsection{The remaining cases}

For both interior and boundary square diagonals, the directions used in
\eqref{eq:interior-square-directions} and
\eqref{eq:boundary-square-preliminary-lift} are tangent to their supporting faces.
Their face fluxes vanish, so the preliminary lift already has zero element
moments and
\[
 U_e^4=L_e^4.
\]

For a two-tetrahedron ridge, extend the preliminary lift by zero from its
three-tetrahedron patch to the complete Freudenthal cube containing the target
star.
The cube is then completed by a transverse two-tetrahedron cap, so the
repair for the body-diagonal star applies after a cyclic coordinate permutation.
This gives a single eight-tetrahedron template for every longitudinal
position of the target edge.  A one-plane coordinate edge likewise uses a
single six-tetrahedron template contained in its longitudinal slab.  Its two
chiral exact stars are related by a coordinate-exchange reflection.  The
coordinates, fields, symmetry maps, and trace--moment identities are collected in Appendix
\ref{app:boundary-quartic-templates}.

\subsection{Local operators on reference patches}

The table records the number of tetrahedra in each support patch.  The exact
edge star is \(\omega_e\), the preliminary support is
\(\omega_e^{\mathrm{pre}}\), and the final support is
\(\omega_e^k\).
\begin{center}
\small
\begin{tabularx}{\textwidth}{@{}Xcccc@{}}
\toprule
Edge-star geometry
& exact star
& preliminary
& \multicolumn{2}{c}{final}\\
\cmidrule(lr){4-5}
& \(\omega_e\)
& \(\omega_e^{\mathrm{pre}}\)
& \(\omega_e^4\)
& \(\omega_e^k\), \(k\geq5\)\\
\midrule
One-tetrahedron ridge & 1 & 1 & 1 & 1\\
Two-tetrahedron ridge & 2 & 3 & 8 & 3\\
One-plane coordinate edge & 3 & 3 & 6 & 3\\
Interior coordinate & 6 & 6 & 8 & 6\\
Boundary square diagonal & 2 & 2 & 2 & 2\\
Interior square diagonal & 4 & 4 & 4 & 4\\
Body diagonal & 6 & 6 & 8 & 6\\
\bottomrule
\end{tabularx}
\end{center}

\begin{theorem}[Reference local edge operators]
\label{thm:reference-local-edge-operators}
For every one of the seven edge-star geometries and every fixed integer
\(k\geq4\), one of the reference patches \(\omega_e^k\) above
carries a linear map
\[
 U_e^k:
 \mathcal G_e^k
 \longrightarrow
 \bm V_k(\omega_e^k)
\]
such that the divergence trace on the target edge equals \(g\), the divergence trace
on every other edge of each tetrahedron is zero, every element divergence moment
is zero, and
\begin{equation}
 \seminorm{U_e^kg}_{H^1(\omega_e^k)}
 \leq
 C\norm{g}_{0,e}.
 \label{eq:reference-local-edge-bound}
\end{equation}
\end{theorem}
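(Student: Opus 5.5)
The proof assembles the operators already built in this section, one degree regime at a time, and then obtains the bound by a finite-dimensionality argument on fixed reference patches.

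\textbf{Case $k\ge5$.} Take $\omega_e^k=\omega_e^{\mathrm{pre}}$ and $U_e^k=L_e-R_e$ as in \eqref{eq:high-order-local-operator}.
\begin{itemize}
\item Theorem~\ref{prop:clean-preliminary-lift} gives the target-edge trace $g$ and zero trace on every other edge of $\omega_e^{\mathrm{pre}}$.
\item Lemma~\ref{lem:patch-moment-correction} shows that $R_e$ has zero divergence trace on all edges, so subtracting it preserves every trace.
\item The same lemma gives $MU_e^kg=\mathbf 0$.
\item Since $R_eg$ is quintic and $k\ge5$, we have $U_e^kg\in\bm V_k(\omega_e^k)$.
\end{itemize}

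\textbf{Case $k=4$.} Go through the seven geometries.
\begin{itemize}
\item \emph{Square diagonals.} $U_e^4=L_e^4$. Zero moments follow from \eqref{eq:interior-square-zero-moments} and its boundary analogue in the appendix.
\item \emph{One-tetrahedron ridge.} $\mathcal G_e^4=\{0\}$, so $U_e^4=0$.
\item \emph{Body diagonal.} Use \eqref{eq:regular-quartic-operator} on the eight-tetrahedron patch $\omega_e^4$.
\begin{itemize}
\item The traces are correct because $\varphi(g)$ and $W_\chi$ are edge-invisible, by \eqref{eq:quartic-correction-fields-edge-invisible} and \eqref{eq:pure-alternating-output}.
\item The zero-extended $L_e^4g$ has zero traces on the edges of $K_6,K_7$.
\item The moments vanish: $M(L_e^4g-\varphi(g))=(\alpha(g)\chi,0,0)$ by \eqref{eq:alternating-remainder}, and this is exactly $\alpha(g)MW_\chi$.
\end{itemize}
\item \emph{Interior coordinate edge.} Transport the body-diagonal construction by the unimodular Piola map \eqref{eq:regular-piola-transform}. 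This map preserves conformity, degree, divergence traces (since $\det=1$) and element moments.
\item \emph{Two-tetrahedron ridge and one-plane coordinate edge.} Invoke the appendix templates. These embed the preliminary lift into a cube-completed patch (body-diagonal repair after a coordinate permutation) or a slab template, with mirror images handled by reflection. The pointwise identities are checked there; I would only confirm that zero extension preserves $H^1_0$ conformity and that no new edge traces appear on the added tetrahedra.
\end{itemize}

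\textbf{The bound \eqref{eq:reference-local-edge-bound}.} In every case $U_e^k$ is a linear map from the finite-dimensional space $\mathcal G_e^k$ into $\bm V_k(\omega_e^k)$, with $\omega_e^k$ a fixed reference patch. Linearity holds because the coefficients $a_i,b_i,\alpha_i,r_g$, as well as $\alpha(g)$, the combination defining $\varphi(g)$, and $\mu_i(g)$, all depend linearly on $g$.
\begin{itemize}
\item $\norm{\cdot}_{0,e}$ is a norm on $\mathcal G_e^k$: it is an $L^2$ norm of polynomial components.
\item $\seminorm{\cdot}_{H^1(\omega_e^k)}$ is a seminorm.
\item Any linear map between finite-dimensional spaces is bounded, which gives a constant $C_{e,k}$.
\item There are finitely many geometries, so $C=\max C_{e,k}$ works for fixed $k$.
\end{itemize}

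\textbf{Main obstacle.} The delicate step is the $k=4$ boundary-ridge case. The enlarged template there must be truly conforming with respect to the physical boundary, and the inherited body-diagonal repair must not create traces on the target edge's actual boundary faces. I would verify this first by checking that every added field vanishes on the outer faces of its support pair. Everything else is bookkeeping over results already stated.
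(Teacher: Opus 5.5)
Your proposal is correct and follows the paper's proof. It uses $U_e^k=L_e-R_e$ on $\omega_e^{\mathrm{pre}}$ for $k\ge5$, the three quartic repairs of Section~\ref{sec:quartic-repair} for $k=4$, and gets the bound from boundedness of linear maps on the finite-dimensional space $\mathcal G_e^k$ over finitely many reference patches, with the constant allowed to depend on $k$. You only spell out the trace and moment bookkeeping (zero extension, edge-invisibility of $\varphi(g)$ and $W_\chi$, invariance under the unimodular Piola map) that the paper compresses into a single sentence.
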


\begin{proof}
For \(k\geq5\), use \eqref{eq:high-order-local-operator}; for \(k=4\), use
the three repairs in Section \ref{sec:quartic-repair}.  All algebraic
properties have already been verified by explicit fields or by the
prefix-sum chain calculation.  For each fixed \(k\), the coefficient maps,
polynomial interpolation, and chain prefix-sum operators act between fixed
finite-dimensional spaces on a finite list of reference geometries.  Their
operator norms are finite.  Norm equivalence on the finite-dimensional
space \(\mathcal G_e^k\), with the natural edge norm
\eqref{eq:natural-edge-norm}, gives the displayed bound.  The constant may
depend on \(k\); the argument takes no maximum over degrees.
\end{proof}

\section{The discrete edge correction and completion of the proof}
\label{sec:edge-right-inverse}

We start with the physical edge lifts.

\begin{lemma}[Uniform patch placement]
\label{lem:local-patch-placement}
Let \(N\geq2\) and fix \(k\geq4\).  Every mesh edge \(e\) admits a patch
\(\omega_e^k\subset\Omega\) that contains its complete star and satisfies
\(\operatorname{diam}(\omega_e^k)\leq Ch\).  Also, the patches can be chosen with overlap bounded
independently of \(N\).
\end{lemma}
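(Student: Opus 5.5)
We build each physical patch as the image of one of the reference templates of Theorem~\ref{thm:reference-local-edge-operators} under a scaled rigid motion that maps the Freudenthal mesh onto itself. We then check that the image lies in $\Omega$, that it has diameter $O(h)$, and that the overlap is uniformly bounded. The key structural fact is that the Freudenthal mesh is invariant under the translations $x\mapsto x+h\,n$ with $n\in\mathbb Z^3$. It is also invariant under the cube symmetries used in Section~\ref{sec:edge-classification}: the cyclic coordinate permutations, and the reflection exchanging the two one-plane configurations after composing with a translation. These symmetries preserve the ordering of coordinate increments up to relabelling.

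For a given edge $e$, we first identify its type (a)--(g) from Table~\ref{tab:freudenthal-edge-types} via its direction and its position relative to $\partial\Omega$. We then take the symmetry $T_e$, a translation composed with a coordinate permutation or reflection and scaled by $h$, that maps the representative edge onto $e$. We set $\omega_e^k:=T_e(\widehat\omega^k)$ with $\widehat\omega^k$ the reference template. By construction this patch contains the exact star $\omega_e$. Its tetrahedra are mesh tetrahedra, because $T_e$ maps Freudenthal cubes to Freudenthal cubes.

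The main point is to verify $\omega_e^k\subset\Omega$. This is automatic for patches equal to the exact star, since a star consists of mesh tetrahedra. The only enlarged patches are the following: the eight-tetrahedron patch for interior coordinate and body-diagonal edges; the eight- or three-tetrahedron patch for the two-tetrahedron ridge; and the six-tetrahedron patch for the one-plane coordinate edge. For the body diagonal, the two cap tetrahedra $K_6,K_7$ in \eqref{eq:regular-two-tet-cap} lie in the cube $[0,1]^2\times[1,2]$ adjacent across a face of the star's cube. When that neighbouring cube falls outside $\Omega$, we use a different admissible cap. The candidates are obtained by applying one of the stabilizing symmetries of the six-star, for example the rotation by a cyclic coordinate permutation fixing $[000,111]$, which moves the cap to a neighbouring cube in the $x$- or $y$-direction. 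Since $N\ge2$, at least one of the three face-neighbours of a cube containing a body diagonal in a given direction (either $+$ or the reflected $-$ side, via central symmetry of the star) lies in $\Omega$. So one of finitely many rotated templates fits. The same argument applies to interior coordinate edges through the Piola map, and to the ridge and one-plane templates, which by Appendix~\ref{app:boundary-quartic-templates} are contained in one cube or one longitudinal slab of width $h$ adjacent to the edge. I expect this case check to be the main obstacle: we must ensure that, for every boundary configuration permitted by $N\ge2$, some symmetric copy of the template stays inside $\Omega$. If needed, the list of reference templates is enlarged by these finitely many symmetric copies, which leaves the constants in Theorem~\ref{thm:reference-local-edge-operators} uniform.

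The two remaining claims follow from the construction. Each template is contained in at most two adjacent cubes of side one, so $\operatorname{diam}(\omega_e^k)\le C h$ with $C$ fixed (for instance $C=\sqrt{6}$). For the overlap, a tetrahedron $K$ belongs to $\omega_e^k$ only if $e$ lies within distance $Ch$ of $K$. The number of mesh edges in such a ball is bounded by a constant independent of $N$, since it is a fixed multiple of the number of cubes meeting the ball, which is at most $(2C+3)^3$. Hence every point of $\Omega$ lies in boundedly many patches.
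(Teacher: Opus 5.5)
Your proposal is correct and follows essentially the same route as the paper. Both transport the finitely many reference templates by mesh symmetries (translations, coordinate permutations, central inversion), use $N\geq2$ to find an in-domain neighbouring cube for the body-diagonal cap and an inward transverse cube for the ridge cap, and deduce the diameter and overlap bounds from the finiteness of the template list.

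There is one small slip. The interior-coordinate star already occupies all four cubes around its edge, a $2\times2\times1$ block that also contains its cap, so no cap choice is needed there. This makes your claim that every template lies in two adjacent cubes false, and the constant $\sqrt6$ wrong. Any fixed constant, e.g.\ $C=3$, works, and the argument is otherwise unaffected.
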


\begin{proof}
By Theorem \ref{thm:reference-local-edge-operators}, it suffices to place the
finite list of reference patches inside \(\Omega\).  Exact-star templates
require no choice.  At \(k=4\), each body-diagonal or interior-coordinate star is enlarged by a
two-tetrahedron cap.  For a body diagonal, one of the six neighboring cubes is
in the box when \(N\geq2\), and the corresponding cap gives the required
reference patch.  The corresponding interior-coordinate patch, obtained by
the affine construction described above, lies in the four cubes incident to
the edge.  Coordinate permutations and whole-box central
inversion cover all orientations.

For a two-tetrahedron ridge, take the complete incident cube and attach the
two-tetrahedron cap in either transverse inward direction.  Since \(N\geq2\),
such an adjacent cube is available, and the patch stays in the longitudinal
layer of the target edge.  For a one-plane coordinate edge, use the
six-tetrahedron template contained in its longitudinal slab.  Proper coordinate
rotations cover the three configurations of either chirality, and a
coordinate-exchange reflection covers the other chirality.  Thus every edge
receives an in-domain copy of one of the reference patches.  Since the template
list is finite and every patch remains within a fixed number of mesh layers of
its target edge, both the diameter and the overlap bounds follow.
\end{proof}

For each assigned physical patch, let \(\widehat\omega_{\hat e}^k\) denote
the corresponding reference template and choose an affine bijection
\[
F_e:\widehat\omega_{\hat e}^k\longrightarrow\omega_e^k,
\qquad
F_e(\hat x)=x_e+hP_e\hat x,
\]
where \(P_e\) belongs to a finite family of unimodular matrices.  With
\(F_e(\hat e)=e\), pull the physical edge data back, including the
incident-side reindexing, by
\[
\widehat g_{\widehat K}(\hat x)
=
g_{F_e(\widehat K)}(F_e(\hat x)),
\qquad
\widehat K\in\mathcal T(\widehat\omega_{\hat e}),
\quad
\hat x\in\hat e.
\]
The affine transformation identifies \(S_e\) with \(S_{\hat e}\), and hence
\(\widehat g\in\mathcal G_{\hat e}^k\).

Define
\[
U_{e,h}^k:\mathcal G_e^k\to \bm V_{h,k},
\qquad
U_{e,h}^kg(x)
=
hP_e\bigl(U_{\hat e}^k\widehat g\bigr)(F_e^{-1}x),
\]
on \(\omega_e^k\), and extend it by zero outside the patch.  By a standard
scaling argument, together with Theorem
\ref{thm:reference-local-edge-operators}, \(U_{e,h}^k\) is a linear map
supported in \(\omega_e^k\), whose divergence reproduces \(g_K\) on every
target side \((K,e)\), vanishes on every non-target element-side edge, and
has zero integral on every element.  Moreover,
\[
\seminorm{U_{e,h}^kg}_{H^1(\omega_e^k)}^2
\leq
C h^2\norm{g}_{0,e}^2.
\]

We now assemble the local edge lifts to obtain the global edge correction.

\begin{theorem}[Global edge correction]
\label{thm:edge-stage}
Let \(N\geq2\) and fix \(k\geq4\).  For \(q\in Q_h^V\) and \(g_e=\Gamma_eq\in\mathcal G_e^k\), define
\begin{equation}
 R_E q
 :=
 \sum_{e\in\mathcal E_h} U_{e,h}^k g_e.
 \label{eq:global-edge-sum}
\end{equation}
 Then \(R_E:Q_h^V\to \bm V_{h,k}\) is a bounded linear map satisfying
\[
 q-\diver R_E q\in Q_h^E,
 \qquad
 \seminorm{R_E q}_{H^1(\Omega)}
 \leq C\norm{q}_{L^2(\Omega)}
\]
for every \(q\in Q_h^V\).
\end{theorem}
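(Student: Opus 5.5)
The argument has two parts: an algebraic one showing that $q-\diver R_Eq$ lies in $Q_h^E$, and an analytic one giving the $H^1$ bound. Both rest on Lemma~\ref{lem:residual-admissible}, which guarantees that each $g_e=\Gamma_eq$ lies in $\mathcal G_e^k$, so the physical operators $U_{e,h}^k$ are defined. Linearity of $R_E$ is then immediate: $q\mapsto\Gamma_eq$ is linear, each $U_{e,h}^k$ is linear, and the sum over edges is finite.

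For the algebraic part, set $r:=q-\diver R_Eq$. Since $q\in Q_{h,k-1}$ and $R_Eq\in\bm V_{h,k}$, we have $r\in Q_{h,k-1}$. First, every summand $U_{e,h}^kg_e$ has zero divergence integral on every element, and $q\in Q_h^M$, so all element means of $r$ vanish. Next fix an element side $(K,f)$ with $f\in\mathcal E(K)$. By the trace properties stated after the scaling construction, the term $U_{f,h}^kg_f$ contributes exactly $(g_f)_K=q|_{K,f}$. Every other term $U_{e,h}^kg_e$ with $e\neq f$ contributes zero. This holds inside its patch $\omega_e^k$, because $f$ is then a non-target edge. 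It also holds outside $\omega_e^k$, where the field vanishes identically, and this includes the case where $K\not\subset\omega_e^k$ but $f$ lies on $\partial\omega_e^k$: there we use the traces from $K$'s side, and the field is zero on $K$. Hence $r|_{K,f}\equiv0$. In particular $r$ vanishes at all element-side vertices, since every vertex lies on an edge. Together these show $r\in Q_h^E$. The one point needing care is that the edge traces are element-side quantities, so the zero extension really does give zero on the $K$ side whenever $K\notin\mathcal T(\omega_e^k)$.

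For the bound, we combine the local estimate $|U_{e,h}^kg_e|_{H^1(\omega_e^k)}^2\le Ch^2\|g_e\|_{0,e}^2$ with the bounded overlap from Lemma~\ref{lem:local-patch-placement}. Each element lies in at most $C$ patches, so a Cauchy--Schwarz argument on each element gives
\[
|R_Eq|_{H^1(\Omega)}^2\le C\sum_{e}|U_{e,h}^kg_e|_{H^1(\omega_e^k)}^2\le Ch^2\sum_e\sum_{K\in\mathcal T(\omega_e)}\|q|_K\|_{L^2(e)}^2 .
\]
We then apply the scaled trace/inverse inequality for polynomials of degree $k-1$ on a tetrahedron of size $h$, namely $\|p\|_{L^2(e)}^2\le Ch^{-2}\|p\|_{L^2(K)}^2$; this follows from equivalence of norms on the reference element and the scaling $|e|\sim h$, $|K|\sim h^3$. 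Each element has six edges, so the double sum is at most $Ch^{-2}\sum_K\|q\|_{L^2(K)}^2$, and the factors of $h$ cancel to give $|R_Eq|_{H^1}\le C\|q\|_{L^2(\Omega)}$. The constant depends only on $k$, the finite template list, and the overlap bound, so it is independent of $N$.

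I expect the main obstacle to be the bookkeeping in the algebraic step rather than the estimates. Specifically, one has to confirm that the enlarged patches (the caps for $k=4$, and the extra tetrahedron for two-tetrahedron ridges) do not disturb traces on edges of neighbouring elements. This is covered by the statement that the divergence trace vanishes on every non-target edge of every tetrahedron in $\omega_e^k$, and by the argument above for elements outside the patch.
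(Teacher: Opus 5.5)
Your proposal is correct and follows the paper's proof essentially step by step. It obtains the trace identity on each element-side edge from the target/non-target property together with zero extension, gets vanishing element means from the local moment property, and derives the $H^1$ bound from bounded overlap, the scaled local estimate $Ch^2\norm{g_e}_{0,e}^2$, and the inverse trace inequality $\norm{p}_{L^2(e)}^2\le Ch^{-2}\norm{p}_{L^2(K)}^2$. You also make explicit two points the paper leaves implicit: the element-side trace bookkeeping under zero extension, and the scaling behind the trace inequality.
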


\begin{proof}
Lemma \ref{lem:residual-admissible} gives
\(g_e\in\mathcal G_e^k\).  
Both \(q\mapsto\Gamma_eq\) and \(U_{e,h}^k\) are linear, so the assembled map
\(R_E\) is linear.
Fix an element-side edge \((K,e)\).  A lift whose target is a different
geometric edge has zero divergence polynomial on \((K,e)\).  Therefore the
local trace property of \(U_{e,h}^k\) gives
\[
 (\diver R_Eq)|_{K,e}=q|_{K,e}.
\]
The local moment property also gives
\[
 \int_K\diver R_Eq\,\dd x=0
 \qquad(K\in\Th).
\]

The residual \(q-\diver R_Eq\) remains in \(Q_{h,k-1}\), because it is
the difference of two divergences of functions in \(\bm V_{h,k}\).  Since
\(q\in Q_h^M\), the residual has zero mean on every element.  Its
element-side edge polynomials vanish, and this includes the vertex
conditions.  Hence
\[
 q-\diver R_Eq\in Q_h^E.
\]

Using the uniformly bounded overlap of the local patches, we obtain:
\[
\begin{aligned}
 \seminorm{R_Eq}_{H^1(\Omega)}^2
 &\leq
 C\sum_{e\in\mathcal E_h}
 \seminorm{U_{e,h}^kg_e}_{H^1(\omega_e^k)}^2\\
 &\leq
 C\sum_{e\in\mathcal E_h}h^2\norm{g_e}_{0,e}^2\\
 &\leq
 C\norm{q}_{L^2(\Omega)}^2.
\end{aligned}
\]
The last inequality follows from the trace inverse estimation.
\end{proof}

This completes the construction of the edge correction. We are now in a position to combine the preceding correction stages and prove Theorem~\ref{thm:target}.

\begin{proof}(Theorem~\ref{thm:target})
We first consider the case \(N\geq2\).  Apply Lemma
\ref{lem:composition} to the nested subspaces
\[
Q_h^0\supset Q_h^M\supset Q_h^V
\supset Q_h^E\supset Q_h^F\supset\{0\},
\]
with
\[
R_1=R_M,\qquad
R_2=R_V,\qquad
R_3=R_E,\qquad
R_4=R_F,\qquad
R_5=R_B.
\]
Proposition \ref{prop:zhang-stages} provides the corrections
\(R_M,R_V,R_F\), and \(R_B\), while Theorem
\ref{thm:edge-stage} provides the edge correction \(R_E\).
Lemma \ref{lem:composition} yields, for every
\(q_h\in Q_{h,k-1}=Q_h^0\), a function
\(\bm v_h\in\bm V_{h,k}\) such that
\[
\diver\bm v_h=q_h,
\qquad
\seminorm{\bm v_h}_{H^1(\Omega)}
\leq C\norm{q_h}_{L^2(\Omega)},
\]
where \(C\) is independent of \(h\) and \(N\).

It remains to consider \(N=1\).  Since this is a fixed mesh, the
surjective divergence map
\[
B:\bm V_{h,k}\longrightarrow Q_{h,k-1},
\qquad
B\bm v_{h}=\diver\bm v_h,
\]
acts between fixed finite-dimensional spaces. Since for \(N=1\), both the mesh and the finite element spaces are fixed,
the same estimate holds for \(N=1\), with
a constant depending only on the fixed polynomial degree \(k\).
Combining this fixed-mesh bound with the uniform estimate for
\(N\geq2\) gives the proof.
\end{proof}
\appendix

\section{Explicit quartic boundary templates}
\label{app:boundary-quartic-templates}

This appendix supplies the boundary formulas used in the quartic-lift
Lemma \ref{lem:quartic-clean-lift}.  The preliminary fields are listed before
the moment correction, which preserves all divergence traces on edges.  Write
\begin{align*}
 \Phi_{abc}^a(d)
   &:=\Phi_{[a,b,c]}^a(d),\\
 \Phi_{abc}[r_a,r_b,r_c]
   &:=\lambda_a\lambda_b\lambda_c
      (r_a\lambda_a+r_b\lambda_b+r_c\lambda_c),\\
 \mathbf e
   &:=\mathbf e_x+\mathbf e_y+\mathbf e_z.
\end{align*}
All displayed faces are interior faces of the corresponding patch, and all
displayed \(\Theta\)-fields are associated with patch-interior edges.
Consequently every field below belongs to the continuous patch--\(H_0^1\)
quartic space.  The edge and moment outputs follow from
\eqref{eq:quartic-face-edge-traces}--\eqref{eq:quartic-edge-field-moment}.

\subsection{The one-tetrahedron ridge}
\label{app:one-tet-ridge}

On a one-tetrahedron ridge, \(S_e=\{\mathbf 0\}\), hence
\(\mathcal G_e^k=\{\mathbf 0\}\) at every degree.  The lift is the
zero field in this case. 

\subsection{The boundary square-diagonal template}
\label{app:boundary-square}

Take the target edge \(e=[A,B]\) and the vertices
\[
 A=000,\qquad B=011,\qquad
 E_1=001,\qquad E_2=010,\qquad E_3=111.
\]
Its exact edge star consists of
\begin{equation*}
 K_1=[A,E_1,B,E_3],
 \qquad
 K_2=[A,E_2,B,E_3].
\end{equation*}
The tetrahedra share the interior face \(F=[A,B,E_3]\).  

For \(g\in\mathcal G_e^4\), the equal-side condition
\(S_e=\{(z,z):z\in\R\}\) gives unique \(a,b\in\R\) such that
\[
 g|_{K_1}=g|_{K_2}
 =a\lambda_A^2\lambda_B+b\lambda_A\lambda_B^2.
\]
The quartic lift is 
\begin{equation}
 L_e^4g=a\Phi_F^A(\mathbf e)+b\Phi_F^B(\mathbf e_x).
 \label{eq:boundary-square-preliminary-lift}
\end{equation}
Both \(\mathbf e\) and \(\mathbf e_x\) are tangent to \(F\).  Hence no moment repair is needed.

\subsection{The one-plane coordinate template}
\label{app:one-plane-coordinate}

We construct the operators for the reference star in
Figure~\ref{fig:one-plane-coordinate-stars}(a), using the enlarged patch in
Figure~\ref{fig:one-plane-coordinate-repair-patch} for the moment repair.

\begin{figure}[!ht]
\centering
\input{tikz/a3-06-repair.tex}
\caption{The six-tetrahedron repair patch for the one-plane coordinate star
in Figure~\ref{fig:one-plane-coordinate-stars}(a).}
\label{fig:one-plane-coordinate-repair-patch}
\end{figure}

For this reference star, set
\[
 A=101,\quad B=102,\quad
 F_0=001,\quad F_1=112,\quad F_2=212,\quad F_3=202.
\]
The three tetrahedra incident to \(e=[A,B]\) are
\[
 K_0=[F_0,F_1,A,B],\qquad
 K_1=[A,F_2,F_1,B],\qquad
 K_2=[A,F_2,F_3,B].
\]

\paragraph{Preliminary lift.}
For \(g\in\mathcal G_e^4\), write
\[
 g|_{K_i}
 =a_i\lambda_A^2\lambda_B+b_i\lambda_A\lambda_B^2,
 \qquad i=0,1,2.
\]
The six endpoint fields are
\begingroup
\renewcommand{\arraystretch}{1.1}
\setlength{\arraycolsep}{7pt}
\begin{equation}
\begin{array}{c|cc}
 i&\ell_{A,i}&\ell_{B,i}\\ \hline
 0&
 \Phi_{ABF_1}^A(\mathbf e)&
 \Phi_{ABF_1}^B(\mathbf e_y)-\Phi_{ABF_2}^B(\mathbf e_x)\\
 1&
 -\Phi_{ABF_1}^A(\mathbf e_x)&
 \Phi_{ABF_2}^B(\mathbf e_x)\\
 2&
 \Phi_{ABF_2}^A(\mathbf e)+\Phi_{ABF_1}^A(\mathbf e_x)&
 \Phi_{ABF_2}^B(\mathbf e_y)
\end{array}
\label{eq:boundary-one-plane-endpoint-fields}
\end{equation}
\endgroup
The quartic lift is
\begin{equation}
 L_e^4g
 =\sum_{i=0}^2
 \bigl(a_i\ell_{A,i}+b_i\ell_{B,i}\bigr).
 \label{eq:boundary-one-plane-preliminary-lift}
\end{equation}

\paragraph{Quartic repair.}
Adjoin \(F_4=111\), \(F_5=002\), and the three tetrahedra
\begin{equation}
 K_3=[F_0,F_4,F_1,A],\qquad
 K_4=[F_0,F_1,B,F_5],\qquad
 K_5=[A,F_4,F_2,F_1].
 \label{eq:boundary-one-plane-extension}
\end{equation}
The resulting patch \(\omega_e^4=\bigcup_{i=0}^5K_i\), shown in
Figure~\ref{fig:one-plane-coordinate-repair-patch}, stays within the
longitudinal slab \(1\leq z\leq2\).

On this patch, define the five repair generators
\begin{equation}
\begin{aligned}
 Y_0={}&\Theta_{AF_1}(\mathbf e_x)
 +2\Phi_{F_0AF_1}^{F_1}(\mathbf e)
 +2\Phi_{AF_1F_2}^A(\mathbf e),\\
 Y_1={}&\Theta_{AF_1}(\mathbf e_z)
 -2\Phi_{ABF_1}^{F_1}(\mathbf e_y)
 -2\Phi_{AF_4F_1}^A(\mathbf e_y),\\
 Y_2={}&\Phi_{F_0AF_1}^{F_1}(\mathbf e_z)
 +\Phi_{F_0BF_1}^{F_1}(-\mathbf e_x-\mathbf e_y)
 -\Phi_{ABF_1}^{F_1}(\mathbf e_x),\\
 Y_3={}&\Phi_{F_0BF_1}^B(\mathbf e_x)
 +\Phi_{ABF_1}^B(\mathbf e_x)
 +\Phi_{ABF_2}^B(\mathbf e_x),\\
 Y_4={}&\Phi_{ABF_1}^A(\mathbf e_x)
 +\Phi_{ABF_2}^A(\mathbf e_x+\mathbf e_z)
 -\Phi_{AF_1F_2}^A(\mathbf e_y).
\end{aligned}
\label{eq:boundary-one-plane-repair-generators}
\end{equation}
Only two combinations are needed to cancel the preliminary moments:
\begin{equation}
\begin{aligned}
 C_A&=\tfrac16Y_0-\tfrac13Y_2-\tfrac13Y_3+\tfrac13Y_4,\\
 C_B&=-\tfrac1{12}Y_0-\tfrac14Y_1+\tfrac23Y_2
      +\tfrac23Y_3+\tfrac13Y_4.
\end{aligned}
\label{eq:boundary-one-plane-closers}
\end{equation}
In the element order \((K_0,\ldots,K_5)\), they satisfy
\begin{equation}
\begin{aligned}
 EC_A=EC_B&=\mathbf 0,\\
 360MC_A&=(1,-1,0,0,0,0),\\
 360MC_B&=(0,1,-1,0,0,0).
\end{aligned}
\label{eq:boundary-one-plane-output}
\end{equation}
Consequently, the repaired lift is
\begin{equation}
\begin{aligned}
 U_e^4g
 =L_e^4g
 &+(-a_0+a_1-a_2)C_A\\
 &+(b_0-b_1+b_2)C_B.
\end{aligned}
\label{eq:boundary-one-plane-final-lift}
\end{equation}

\subsection{The two-tetrahedron ridge template}
\label{app:two-tet-ridge}

We use the three-tetrahedron patch in
Figure~\ref{fig:boundary-ridge-patches}(a) for the lift and the
eight-tetrahedron patch in (b) for the repair.

\begin{figure}[!ht]
\centering
\begin{minipage}[b]{0.48\textwidth}
\centering
\input{tikz/a4-ridge-clean.tex}
\par\smallskip
{\footnotesize\textbf{(a)} Three-tetrahedron lift patch}
\end{minipage}\hfill
\begin{minipage}[b]{0.48\textwidth}
\centering
\input{tikz/a4-ridge-repair.tex}
\par\smallskip
{\footnotesize\textbf{(b)} Eight-tetrahedron repair patch}
\end{minipage}
\caption{The two-tetrahedron ridge template.}
\label{fig:boundary-ridge-patches}
\end{figure}

Take the target edge \(e=[A,B]\) and set
\[
 A=000,\quad B=001,\quad D=111,\quad U=101,\quad V=011.
\]
The exact star consists of
\[
 K^+=[A,D,U,B],\qquad K^-=[A,D,V,B].
\]
Adjoin \(K^0=[A,100,D,U]\) to obtain the lift patch in (a).

\paragraph{Preliminary lift.}
For \(g\in\mathcal G_e^4\), write
\[
 g|_{K^\pm}=a_\pm\lambda_A^2\lambda_B+b_\pm\lambda_A\lambda_B^2.
\]
Set
\[
 V_*:=\Phi_{ABD}^A(\mathbf e_y+\mathbf e_z)-\Phi_{AUD}^A(\mathbf e_x).
\]
The quartic lift is
\begin{equation}
\begin{aligned}
 L_e^4g={}&(a_+-a_-)V_*+a_-\Phi_{ABD}^A(\mathbf e)\\
         &+b_+\Phi_{ABD}^B(\mathbf e_y)+b_-\Phi_{ABD}^B(\mathbf e_x).
\end{aligned}
\label{eq:boundary-ridge-prelifts}
\end{equation}

\paragraph{Quartic repair.}
Complete the incident cube and adjoin a transverse two-tetrahedron cap.
Let \(P_x(x,y,z)=(z,x,y)\) and set
\(Q_i=P_x(K_i)\), where \(K_0,\ldots,K_7\) are the tetrahedra in
\eqref{eq:body-six-star} and \eqref{eq:regular-two-tet-cap}.
Then \(Q_0,\ldots,Q_5\) subdivide the unit cube, and
\begin{equation}
 Q_6=[100,211,110,111],\qquad Q_7=[100,211,111,101]
\label{eq:boundary-ridge-side-cap}
\end{equation}
form the cap.  The resulting patch \(\omega_e^4=\bigcup_{i=0}^7Q_i\)
is shown in Figure~\ref{fig:boundary-ridge-patches}(b).

For \(m\in\R_0^6\), set
\[
 \alpha(m)=\frac{\chi\cdot m}{6},
\]
and choose \(\varphi(m)\in\linspan\{\varphi_0,\ldots,\varphi_3\}\) with
\(M\varphi(m)=m-\alpha(m)\chi\), as in
\eqref{eq:ordinary-repair-space}.  Using the Piola transform associated with
\(P_x\), define
\begin{equation}
 \mathcal R_x(m):=
 \mathcal P_{P_x}\bigl(\varphi(m)+\alpha(m)W_\chi\bigr).
\label{eq:universal-cube-cap-repair}
\end{equation}
Equations \eqref{eq:quartic-correction-fields-edge-invisible} and
\eqref{eq:pure-alternating-output} give, in the order \((Q_0,\ldots,Q_7)\),
\[
 E\mathcal R_x(m)=\mathbf0,\qquad M\mathcal R_x(m)=(m,0,0).
\]

Extend \(L_e^4g\) by zero and set
\[
 m(g)=\left(\int_{Q_i}\diver L_e^4g\,\dd x\right)_{i=0}^5.
\]
The zero extension to the cube belongs to \(H_0^1\), so \(m(g)\in\R_0^6\).
The repaired lift is
\begin{equation}
 U_e^4g=L_e^4g-\mathcal R_x\bigl(m(g)\bigr).
\label{eq:boundary-ridge-final}
\end{equation}
The alternative transverse cap is obtained using \(P_y(x,y,z)=(y,z,x)\).
Both choices stay in the longitudinal slab of the target edge.

\section*{Acknowledgments}
The work is partially supported by National Key R \& D Program of China (2024YFA1012502); NSFC grants 12494543, 92370205, 12371438
and the Strategic Priority Research Program of the Chinese Academy of Sciences (grant no. XDA0480504).

\bibliographystyle{amsplain}
\bibliography{references}

\end{document}

%% file: tikz/styles.tex
\tikzset{
  sv mesh/.style={draw=black,line width=0.5pt},
  sv internal/.style={draw=black!72,line width=0.5pt},
  sv hidden/.style={draw=black!62,densely dashed,line width=0.5pt},
  sv target/.style={draw=red!70!black,line width=1pt},
  sv vertex/.style={circle,fill=black,draw=none,inner sep=0pt,outer sep=0pt},
  sv regular projection/.style={x={(3.10cm,0cm)},
    y={(1.30cm,0.75cm)},z={(0cm,3.10cm)}},
  sv support/.style={draw=black,line width=0.9pt},
  sv auxiliary/.style={draw=red!80!black,line width=0.55pt}
}

%% file: tikz/freudenthal-box.tex
\begin{tikzpicture}[x={(2.078461cm,-0.688292cm)},y={(1.200000cm,1.192156cm)},
  z={(0cm,1.965965cm)},font=\scriptsize,
  ref hidden/.style={draw=black!27,densely dashed,line width=0.3pt},
  ref diagonal/.style={draw=black!45,line width=0.35pt},
  ref frame/.style={draw=black!80,line width=0.55pt},
  ref target/.style={draw=red!75!black,line width=1.15pt},
  ref label/.style={text=red!75!black,fill=white,inner sep=1pt}]
\coordinate (m000) at (0,0,0);
\coordinate (m001) at (0,0,1);
\coordinate (m010) at (0,1,0);
\coordinate (m011) at (0,1,1);
\coordinate (m020) at (0,2,0);
\coordinate (m021) at (0,2,1);
\coordinate (m100) at (1,0,0);
\coordinate (m101) at (1,0,1);
\coordinate (m110) at (1,1,0);
\coordinate (m111) at (1,1,1);
\coordinate (m120) at (1,2,0);
\coordinate (m121) at (1,2,1);
\coordinate (m200) at (2,0,0);
\coordinate (m201) at (2,0,1);
\coordinate (m210) at (2,1,0);
\coordinate (m211) at (2,1,1);
\coordinate (m220) at (2,2,0);
\coordinate (m221) at (2,2,1);
\draw[ref hidden] (m000)--(m010);
\draw[ref hidden] (m000)--(m011);
\draw[ref hidden] (m000)--(m110);
\draw[ref hidden] (m000)--(m111);
\draw[ref hidden] (m010)--(m011);
\draw[ref hidden] (m010)--(m020);
\draw[ref hidden] (m010)--(m021);
\draw[ref hidden] (m010)--(m110);
\draw[ref hidden] (m010)--(m111);
\draw[ref hidden] (m010)--(m120);
\draw[ref hidden] (m010)--(m121);
\draw[ref hidden] (m020)--(m021);
\draw[ref hidden] (m020)--(m120);
\draw[ref hidden] (m020)--(m121);
\draw[ref hidden] (m100)--(m110);
\draw[ref hidden] (m100)--(m111);
\draw[ref hidden] (m100)--(m210);
\draw[ref hidden] (m100)--(m211);
\draw[ref hidden] (m110)--(m111);
\draw[ref hidden] (m110)--(m120);
\draw[ref hidden] (m110)--(m121);
\draw[ref hidden] (m110)--(m210);
\draw[ref hidden] (m110)--(m211);
\draw[ref hidden] (m110)--(m220);
\draw[ref hidden] (m110)--(m221);
\draw[ref hidden] (m120)--(m121);
\draw[ref hidden] (m120)--(m220);
\draw[ref hidden] (m120)--(m221);
\draw[ref diagonal] (m000)--(m101);
\draw[ref diagonal] (m001)--(m111);
\draw[ref diagonal] (m011)--(m121);
\draw[ref diagonal] (m100)--(m201);
\draw[ref diagonal] (m101)--(m211);
\draw[ref diagonal] (m111)--(m221);
\draw[ref diagonal] (m200)--(m211);
\draw[ref diagonal] (m210)--(m221);
\draw[ref frame] (m000)--(m001);
\draw[ref frame] (m000)--(m100);
\draw[ref frame] (m001)--(m011);
\draw[ref frame] (m001)--(m101);
\draw[ref frame] (m011)--(m021);
\draw[ref frame] (m011)--(m111);
\draw[ref frame] (m021)--(m121);
\draw[ref frame] (m100)--(m101);
\draw[ref frame] (m100)--(m200);
\draw[ref frame] (m101)--(m111);
\draw[ref frame] (m101)--(m201);
\draw[ref frame] (m111)--(m121);
\draw[ref frame] (m111)--(m211);
\draw[ref frame] (m121)--(m221);
\draw[ref frame] (m200)--(m201);
\draw[ref frame] (m200)--(m210);
\draw[ref frame] (m201)--(m211);
\draw[ref frame] (m210)--(m211);
\draw[ref frame] (m210)--(m220);
\draw[ref frame] (m211)--(m221);
\draw[ref frame] (m220)--(m221);
\draw[ref target] (m200)--(m201) node[pos=0.5,ref label,right=3pt] {(a)};
\draw[ref target] (m000)--(m001) node[pos=0.5,ref label,left=3pt] {(b)};
\draw[ref target] (m100)--(m101) node[pos=0.35,ref label,left=3pt] {(c)};
\draw[ref target] (m110)--(m111) node[pos=0.25,ref label,right=3pt] {(d)};
\draw[ref target] (m101)--(m211) node[pos=0.75,ref label,above=3pt] {(e)};
\draw[ref target] (m100)--(m111) node[pos=0.38,ref label,below right=1pt] {(f)};
\draw[ref target] (m010)--(m121) node[pos=0.65,ref label,above left=1pt] {(g)};
\node[below left=3pt] at (m000) {$000$};
\node[below right=3pt] at (m200) {$200$};
\node[above left=3pt] at (m021) {$021$};
\node[above right=3pt] at (m221) {$221$};
\end{tikzpicture}%

%% file: tikz/a3-06-star.tex
\begin{tikzpicture}[x=0.48cm,y=0.48cm,line cap=round,line join=round]
\path[use as bounding box] (-4,-3.65) rectangle (4,3.65);
\fill[black!6] (-3.7500000,-1.9250000)--(-0.6500000,-1.9250000)--(-0.6500000,1.1750000)--cycle;
\fill[black!6] (-0.6500000,-1.9250000)--(-0.6500000,1.1750000)--(2.4500000,1.1750000)--cycle;
\draw[sv hidden] (-3.7500000,-1.9250000)--(0.6500000,1.9250000);
\draw[sv hidden] (-0.6500000,-1.9250000)--(0.6500000,1.9250000);
\draw[sv mesh] (-3.7500000,-1.9250000)--(-0.6500000,-1.9250000);
\draw[sv mesh] (-3.7500000,-1.9250000)--(-0.6500000,1.1750000);
\draw[sv mesh] (-0.6500000,-1.9250000)--(2.4500000,1.1750000);
\draw[sv mesh] (-0.6500000,-1.9250000)--(3.7500000,1.9250000);
\draw[sv mesh] (-0.6500000,1.1750000)--(0.6500000,1.9250000);
\draw[sv mesh] (-0.6500000,1.1750000)--(2.4500000,1.1750000);
\draw[sv mesh] (-0.6500000,1.1750000)--(3.7500000,1.9250000);
\draw[sv mesh] (0.6500000,1.9250000)--(3.7500000,1.9250000);
\draw[sv mesh] (2.4500000,1.1750000)--(3.7500000,1.9250000);
\draw[sv target] (-0.6500000,-1.9250000)--(-0.6500000,1.1750000);
\end{tikzpicture}%

%% file: tikz/a3-05-star.tex
\begin{tikzpicture}[x=0.48cm,y=0.48cm,line cap=round,line join=round]
\path[use as bounding box] (-4,-3.65) rectangle (4,3.65);
\fill[black!6] (-2.2000000,-3.4750000)--(-2.2000000,-0.3750000)--(0.9000000,-0.3750000)--cycle;
\fill[black!6] (-2.2000000,-0.3750000)--(0.9000000,-0.3750000)--(0.9000000,2.7250000)--cycle;
\draw[sv hidden] (-2.2000000,-0.3750000)--(2.2000000,0.3750000);
\draw[sv hidden] (-2.2000000,-0.3750000)--(2.2000000,3.4750000);
\draw[sv mesh] (-2.2000000,-3.4750000)--(-2.2000000,-0.3750000);
\draw[sv mesh] (-2.2000000,-3.4750000)--(0.9000000,-0.3750000);
\draw[sv mesh] (-2.2000000,-3.4750000)--(2.2000000,0.3750000);
\draw[sv mesh] (-2.2000000,-0.3750000)--(0.9000000,2.7250000);
\draw[sv mesh] (0.9000000,-0.3750000)--(0.9000000,2.7250000);
\draw[sv mesh] (0.9000000,-0.3750000)--(2.2000000,0.3750000);
\draw[sv mesh] (0.9000000,-0.3750000)--(2.2000000,3.4750000);
\draw[sv mesh] (0.9000000,2.7250000)--(2.2000000,3.4750000);
\draw[sv mesh] (2.2000000,0.3750000)--(2.2000000,3.4750000);
\draw[sv target] (-2.2000000,-0.3750000)--(0.9000000,-0.3750000);
\end{tikzpicture}%

%% file: tikz/regular-body-diagonal-star.tex
\begin{tikzpicture}[font=\small,scale=0.7]
  \begin{scope}[sv regular projection]
    \coordinate (rA)  at (0,0,0);
    \coordinate (rX)  at (1,0,0);
    \coordinate (rY)  at (0,1,0);
    \coordinate (rXY) at (1,1,0);
    \coordinate (rC)  at (0,0,1);
    \coordinate (rU)  at (1,0,1);
    \coordinate (rV)  at (0,1,1);
    \coordinate (rB)  at (1,1,1);

    \draw[sv target] (rA)--(rB);
    \draw[sv mesh] 
      (rA)--(rX)--(rU)--(rC)--cycle
      (rX)--(rXY)--(rB)--(rV)--(rC)
      (rA)--(rU)--(rB)--(rX)
      (rB)--(rC);
    \draw[sv hidden]
      (rA)--(rY)--(rXY)
      (rB)--(rY)--(rV)
      (rV)--(rA)--(rXY);

    \foreach \p in {rA,rB,rX,rY,rXY,rC,rU,rV}
      \node[sv vertex] at (\p) {};
    \node[below=3pt,font=\scriptsize] at (rA) {$A=000$};
    \node[above=3pt,font=\scriptsize] at (rB) {$B=111$};
    \node[below=2pt,font=\scriptsize] at (rX) {$C_0$};
    \node[below right=-2pt,font=\scriptsize] at (rY) {$C_2$};
    \node[right=2pt,font=\scriptsize] at (rXY) {$C_1$};
    \node[left=2pt,font=\scriptsize] at (rC) {$C_4$};
    \node[above=0pt,font=\scriptsize] at (rU) {$C_5$};
    \node[above=2pt,font=\scriptsize] at (rV) {$C_3$};
    \node[above left=1pt,font=\small,text=red!70!black]
      at ($(rA)!0.54!(rB)$) {$e$};
  \end{scope}
\end{tikzpicture}%

%% file: tikz/regular-coordinate-star.tex
\begin{tikzpicture}[font=\small,scale=0.65]

  \begin{scope}[sv regular projection]
    \coordinate (qA)   at (1,1,0); 
    \coordinate (qB)   at (1,1,1); 
    \coordinate (q100) at (1,0,0);
    \coordinate (q211) at (2,1,1);
    \coordinate (q221) at (2,2,1);
    \coordinate (q121) at (1,2,1);
    \coordinate (q010) at (0,1,0);
    \coordinate (q000) at (0,0,0);

    \draw[sv hidden]
      (q121)--(q010)--(q000)
      (qA)--(q000) (qA)--(q010) (qA)--(q121)
      (qB)--(q010);

    \draw[sv mesh]
      (q100)--(q211)--(q221)--(q121)
      (q000)--(q100);
    \draw[sv mesh]
      (qA)--(q100) (qA)--(q211) (qA)--(q221)
      (qB)--(q000) (qB)--(q100) (qB)--(q211)
      (qB)--(q221) (qB)--(q121);
    \draw[sv target] (qA)--(qB);

    \foreach \p in {qA,qB,q100,q211,q221,q121,q010,q000}
      \node[sv vertex] at (\p) {};
    \node[below right=0pt,font=\scriptsize] at (qA) {$A'=110$};
    \node[above left=0pt,font=\scriptsize] at (qB) {$B'=111$};
    \node[below =2pt,font=\scriptsize] at (q100) {$C_0'$};
    \node[below right=2pt,font=\scriptsize] at (q211) {$C_1'$};
    \node[above=2pt,font=\scriptsize] at (q221) {$C_2'$};
    \node[above=2pt,font=\scriptsize] at (q121) {$C_3'$};
    \node[above left=2pt,font=\scriptsize] at (q010) {$C_4'$};
    \node[below=2pt,font=\scriptsize] at (q000) {$C_5'$};
    \node[right=2pt,font=\small,text=red!70!black]
      at ($(qA)!0.50!(qB)$) {$e'$};
  \end{scope}
\end{tikzpicture}%

%% file: tikz/interior-square-diagonal-star.tex
\begin{tikzpicture}[sv regular projection,scale=0.8,font=\small]
  \coordinate (sA)   at (0,0,0);
  \coordinate (sB)   at (0,1,1);
  \coordinate (sD0)  at (0,1,0);
  \coordinate (sD1)  at (-1,0,0);
  \coordinate (sD2)  at (0,0,1);
  \coordinate (sD3)  at (1,1,1);

  \draw[sv target] (sA)--(sB);
  \draw[sv mesh]
    (sD1)--(sA)--(sD0)
    (sA)--(sD2)--(sB)--(sD3)
    (sD1)--(sD2)--(sD3)--(sD0)
    (sA)--(sD3);
  \draw[sv hidden]
    (sB)--(sD1)--(sD0)--(sB);
  
  \foreach \p in {sA,sB,sD0,sD1,sD2,sD3}
    \node[sv vertex] at (\p) {};
  \node[below=2pt,font=\scriptsize] at (sA) {$A=000$};
  \node[above=2pt,font=\scriptsize] at (sB) {$B=011$};
  \node[below right=-2pt,font=\scriptsize] at (sD0) {$D_0$};
  \node[below=2pt,font=\scriptsize] at (sD1) {$D_1$};
  \node[above left=-2pt,font=\scriptsize] at (sD2) {$D_2$};
  \node[above=2pt,font=\scriptsize] at (sD3) {$D_3$};
  \node[right=2pt,font=\small,text=red!70!black]
    at ($(sA)!0.52!(sB)$) {$e$};
\end{tikzpicture}%

%% file: tikz/regular-six-star-cap.tex
\begin{tikzpicture}[font=\small,scale=0.70]
  \begin{scope}[sv regular projection]
    \coordinate (pA)  at (0,0,0);
    \coordinate (pX)  at (1,0,0);
    \coordinate (pY)  at (0,1,0);
    \coordinate (pXY) at (1,1,0);
    \coordinate (pC)  at (0,0,1);
    \coordinate (pU)  at (1,0,1);
    \coordinate (pV)  at (0,1,1);
    \coordinate (pB)  at (1,1,1);
    \coordinate (pD)  at (1,1,2);

    \draw[sv target] (pA)--(pB)
      node[pos=0.58,above left=2pt,font=\small,
        text=red!70!black] {$e$};
    \draw[draw=red!80!black,line width=0.55pt] (pC)--(pB);
    \draw[sv mesh] (pA)--(pX)--(pU)--(pC)--cycle;
    \draw[sv mesh]
      (pX)--(pXY)--(pB)
      (pU)--(pB);
    \draw[sv hidden]
      (pA)--(pY)--(pXY)
      (pY)--(pV)
      (pB)--(pV)--(pC);
    \draw[sv mesh]
      (pA)--(pU)
      (pB)--(pX);
    \draw[sv hidden]
      (pA)--(pXY)
      (pB)--(pY)
      (pA)--(pV);

    \draw[sv mesh] (pD)--(pC) (pD)--(pU) (pD)--(pB);
    \draw[sv hidden] (pD)--(pV);
   
    \node[anchor=west,xshift=23pt,yshift=10pt,font=\small, text=red!80!black] at (pC) {$e^\ast$};

    \foreach \p in {pA,pB,pX,pY,pXY,pC,pU,pV,pD}
      \node[sv vertex] at (\p) {};
    \node[below=3pt,font=\scriptsize] at (pA) {$A=000$};
    \node[right=2pt,font=\scriptsize] at (pB) {$B=111$};
    \node[left=2pt,font=\scriptsize] at (pC) {$C=001$};
    \node[above=2pt,font=\scriptsize] at (pD) {$D=112$};

  \end{scope}
\end{tikzpicture}%

%% file: tikz/a3-06-repair.tex
\begin{tikzpicture}[x=0.48cm,y=0.48cm,line cap=round,line join=round]
\path[use as bounding box] (-4,-3.65) rectangle (4,3.65);
\fill[black!6] (-3.7500000,-1.9250000)--(-3.7500000,1.1750000)--(-0.6500000,1.1750000)--cycle;
\fill[black!6] (-3.7500000,-1.9250000)--(-0.6500000,-1.9250000)--(-0.6500000,1.1750000)--cycle;
\fill[black!6] (-0.6500000,-1.9250000)--(-0.6500000,1.1750000)--(2.4500000,1.1750000)--cycle;
\draw[sv hidden] (-3.7500000,-1.9250000)--(0.6500000,-1.1750000);
\draw[sv hidden] (-3.7500000,-1.9250000)--(0.6500000,1.9250000);
\draw[sv hidden] (-0.6500000,-1.9250000)--(0.6500000,1.9250000);
\draw[sv hidden] (0.6500000,-1.1750000)--(0.6500000,1.9250000);
\draw[sv mesh] (-3.7500000,-1.9250000)--(-3.7500000,1.1750000);
\draw[sv mesh] (-3.7500000,-1.9250000)--(-0.6500000,-1.9250000);
\draw[sv mesh] (-3.7500000,-1.9250000)--(-0.6500000,1.1750000);
\draw[sv mesh] (-3.7500000,1.1750000)--(-0.6500000,1.1750000);
\draw[sv mesh] (-3.7500000,1.1750000)--(0.6500000,1.9250000);
\draw[sv mesh] (-0.6500000,-1.9250000)--(0.6500000,-1.1750000);
\draw[sv mesh] (-0.6500000,-1.9250000)--(2.4500000,1.1750000);
\draw[sv mesh] (-0.6500000,-1.9250000)--(3.7500000,1.9250000);
\draw[sv mesh] (-0.6500000,1.1750000)--(0.6500000,1.9250000);
\draw[sv mesh] (-0.6500000,1.1750000)--(2.4500000,1.1750000);
\draw[sv mesh] (-0.6500000,1.1750000)--(3.7500000,1.9250000);
\draw[sv mesh] (0.6500000,-1.1750000)--(3.7500000,1.9250000);
\draw[sv mesh] (0.6500000,1.9250000)--(3.7500000,1.9250000);
\draw[sv mesh] (2.4500000,1.1750000)--(3.7500000,1.9250000);
\draw[sv target] (-0.6500000,-1.9250000)--(-0.6500000,1.1750000);
\end{tikzpicture}%

%% file: tikz/a4-ridge-clean.tex
\begin{tikzpicture}[x=0.68cm,y=0.68cm,line cap=round,line join=round]
\path[use as bounding box] (-4,-2.5) rectangle (4,2.5);
\fill[black!6] (-2.2000000,-1.9250000)--(-2.2000000,1.1750000)--(-0.9000000,1.9250000)--cycle;
\fill[black!6] (-2.2000000,-1.9250000)--(-2.2000000,1.1750000)--(0.9000000,1.1750000)--cycle;
\fill[black!6] (-2.2000000,-1.9250000)--(0.9000000,-1.9250000)--(0.9000000,1.1750000)--cycle;
\draw[sv hidden] (-2.2000000,-1.9250000)--(-0.9000000,1.9250000);
\draw[sv hidden] (-2.2000000,-1.9250000)--(2.2000000,1.9250000);
\draw[sv mesh] (-2.2000000,-1.9250000)--(0.9000000,-1.9250000);
\draw[sv mesh] (-2.2000000,-1.9250000)--(0.9000000,1.1750000);
\draw[sv mesh] (-2.2000000,1.1750000)--(-0.9000000,1.9250000);
\draw[sv mesh] (-2.2000000,1.1750000)--(0.9000000,1.1750000);
\draw[sv mesh] (-2.2000000,1.1750000)--(2.2000000,1.9250000);
\draw[sv mesh] (-0.9000000,1.9250000)--(2.2000000,1.9250000);
\draw[sv mesh] (0.9000000,-1.9250000)--(0.9000000,1.1750000);
\draw[sv mesh] (0.9000000,-1.9250000)--(2.2000000,1.9250000);
\draw[sv mesh] (0.9000000,1.1750000)--(2.2000000,1.9250000);
\draw[sv target] (-2.2000000,-1.9250000)--(-2.2000000,1.1750000);
\end{tikzpicture}%

%% file: tikz/a4-ridge-repair.tex
\begin{tikzpicture}[x=0.68cm,y=0.68cm,line cap=round,line join=round]
\path[use as bounding box] (-4,-2.5) rectangle (4,2.5);
\fill[black!6] (-3.7500000,-1.9250000)--(-3.7500000,1.1750000)--(-2.4500000,1.9250000)--cycle;
\fill[black!6] (-3.7500000,-1.9250000)--(-3.7500000,1.1750000)--(-0.6500000,1.1750000)--cycle;
\fill[black!6] (-3.7500000,-1.9250000)--(-2.4500000,-1.1750000)--(-2.4500000,1.9250000)--cycle;
\fill[black!6] (-3.7500000,-1.9250000)--(-2.4500000,-1.1750000)--(0.6500000,-1.1750000)--cycle;
\fill[black!6] (-3.7500000,-1.9250000)--(-0.6500000,-1.9250000)--(-0.6500000,1.1750000)--cycle;
\fill[black!6] (-3.7500000,-1.9250000)--(-0.6500000,-1.9250000)--(0.6500000,-1.1750000)--cycle;
\draw[sv hidden] (-3.7500000,-1.9250000)--(-2.4500000,-1.1750000);
\draw[sv hidden] (-3.7500000,-1.9250000)--(-2.4500000,1.9250000);
\draw[sv hidden] (-3.7500000,-1.9250000)--(0.6500000,-1.1750000);
\draw[sv hidden] (-3.7500000,-1.9250000)--(0.6500000,1.9250000);
\draw[sv hidden] (-2.4500000,-1.1750000)--(-2.4500000,1.9250000);
\draw[sv hidden] (-2.4500000,-1.1750000)--(0.6500000,-1.1750000);
\draw[sv hidden] (-2.4500000,-1.1750000)--(0.6500000,1.9250000);
\draw[sv hidden] (-0.6500000,-1.9250000)--(0.6500000,1.9250000);
\draw[sv hidden] (0.6500000,-1.1750000)--(0.6500000,1.9250000);
\draw[sv mesh] (-3.7500000,-1.9250000)--(-0.6500000,-1.9250000);
\draw[sv mesh] (-3.7500000,-1.9250000)--(-0.6500000,1.1750000);
\draw[sv mesh] (-3.7500000,1.1750000)--(-2.4500000,1.9250000);
\draw[sv mesh] (-3.7500000,1.1750000)--(-0.6500000,1.1750000);
\draw[sv mesh] (-3.7500000,1.1750000)--(0.6500000,1.9250000);
\draw[sv mesh] (-2.4500000,1.9250000)--(0.6500000,1.9250000);
\draw[sv mesh] (-0.6500000,-1.9250000)--(-0.6500000,1.1750000);
\draw[sv mesh] (-0.6500000,-1.9250000)--(0.6500000,-1.1750000);
\draw[sv mesh] (-0.6500000,-1.9250000)--(3.7500000,1.9250000);
\draw[sv mesh] (-0.6500000,1.1750000)--(0.6500000,1.9250000);
\draw[sv mesh] (-0.6500000,1.1750000)--(3.7500000,1.9250000);
\draw[sv mesh] (0.6500000,-1.1750000)--(3.7500000,1.9250000);
\draw[sv mesh] (0.6500000,1.9250000)--(3.7500000,1.9250000);
\draw[sv target] (-3.7500000,-1.9250000)--(-3.7500000,1.1750000);
\end{tikzpicture}%